\newtheorem{theorem}{Theorem}[section]
\newtheorem{corollary}[theorem]{Corollary}
\newtheorem{lemma}[theorem]{Lemma}
\newtheorem{remark}[theorem]{Remark}
\newtheorem{proposition}[theorem]{Proposition}
\newtheorem{definition}{Definition}[section]
\newcommand{\ba}{\begin{array}}
\newcommand{\ea}{\end{array}}
\newcommand{\Br}{\mathbb{R}}
\newcommand{\st}{\textnormal{s.t.}}
\newcommand{\tr}{\textnormal{tr}\,}
\newcommand{\rank}{\textnormal{rank}}
\newcommand{\re}{\mbox{\rm Re }}
\newcommand{\im}{\mbox{\rm Im }}
\newcommand{\CSI}{\mathbb S}
\newcommand{\F}{\mathcal{F}}
\newcommand{\ov}{\overline}
\newcommand{\SI}{\mathbb S}
\newcommand{\RR}{\mathbf R}
\newcommand{\C}{\mathbb C}
\newcommand{\ACal}{\mathcal{A}}
\newcommand{\XCal}{\mathcal{X}}
\newcommand{\linear}{\boldsymbol L}\,
\newcommand{\matr}{\boldsymbol M}\,
\newcommand{\vect}{\boldsymbol V}\,
\newcommand{\etal}{{et al.\ }}
\begin{document}

\title{Low-M-Rank Tensor Completion \\ and Robust Tensor PCA}

\author{Bo Jiang, Shiqian Ma, and Shuzhong Zhang
\thanks{B. Jiang is with the Research Institute for Interdisciplinary Sciences, School of Information Management and Engineering, Shanghai Univsersity of Finance and Economics, Shanghai 200433, China. Research of this author was supported in part by NSFC grants 11771269 and 11831002, and Program for Innovative Research Team of Shanghai University of Finance and Economics.}
\thanks{S. Ma is with the Department of Mathematics, University of California, Davis, CA 95616 USA. Research of this author was supported in part by a startup package in Department of Mathematics at UC Davis.}
\thanks{S. Zhang is with the Department of Industrial and Systems Engineering, University of Minnesota, and Institute of Data and Decision Analytics, The Chinese University of Hong Kong (Shenzhen). Research of this author was supported in part by the National Science Foundation (Grant CMMI-1462408), and in part by Shenzhen Fundamental Research Fund (Grant KQTD2015033114415450).}
\thanks{Manuscript received April xx, 2018; revised xxx xx, 2018.}}

\markboth{Manuscript,~Vol.~xx, No.~xx, xxx 2018}
{Shell \MakeLowercase{\textit{et al.}}: Low-M-Rank Tensor Completion and Robust Tensor PCA}

\maketitle

\begin{abstract}
In this paper, we propose a new approach to solve low-rank tensor completion and robust tensor PCA. Our approach is based on some novel notion of (even-order) tensor ranks, to be called the M-rank, the symmetric M-rank, and the strongly symmetric M-rank. We discuss the connections between these new tensor ranks and the CP-rank and the symmetric CP-rank of an even-order tensor.
We show that the M-rank provides a reliable and easy-computable approximation to the CP-rank. As a result, we propose to replace the CP-rank by the M-rank in the low-CP-rank tensor completion and robust tensor PCA. Numerical results suggest that our new approach based on the M-rank outperforms existing methods that are based on low-n-rank, t-SVD and KBR approaches for solving low-rank tensor completion and robust tensor PCA when the underlying tensor has low CP-rank.
\end{abstract}

\begin{IEEEkeywords}
Low-Rank Tensor Completion, Robust Tensor PCA, Matrix Unfolding, CP-Rank, Tucker-Rank, M-Rank
\end{IEEEkeywords}

\IEEEpeerreviewmaketitle

\section{Introduction}

\IEEEPARstart{T}{ensor} data have appeared frequently in applications such as computer vision \cite{WA04}, psychometrics \cite{Harshman-70,Carroll-Chang-70},
diffusion magnetic resonance imaging \cite{GTDPMR08,BV08,QYW10}, quantum entanglement problem \cite{HS10}, spectral hypergraph theory \cite{HQ12} and higher-order Markov chains \cite{LN11}.
Tensor-based multi-dimensional data analysis has shown that tensor models can take full advantage of the multi-dimensionality structures of the data, and generate more useful information.
A common observation for huge-scale data analysis is that the data exhibits a low-dimensionality property, or its most representative part lies in a low-dimensional subspace.
To take advantage of this low-dimensionality feature of the tensor data, it becomes imperative that the rank of a tensor, 
which is unfortunately a notoriously thorny issue, is well understood and computationally manageable.
The most commonly used definition of tensor rank is the so-called {\it CP-rank}, where ``C'' stands for CANDECOMP and ``P'' corresponds to PARAFAC and these are two alternate names for the
same tensor decomposition. 
The CP-rank is the most natural notion of tensor rank, which is also very 
difficult to compute numerically. 
\begin{definition}\label{def:asymmetric-cp-rank}
Given a $d$-th order tensor $\mathcal{F} \in \C^{n_1\times n_2\times \cdots \times n_{d}}$ in complex domain, its {\it CP-rank} (denoted as $\rank_{CP}(\mathcal {F})$) is the smallest integer $r$ exhibiting the following decomposition
\begin{equation}\label{formula:asymemtric-decomp}
\mathcal{F} = \sum_{i=1}^{r} a^{1,i}\otimes a^{2,i}\otimes \cdots \otimes a^{d,i},
\end{equation}
where $a^{k,i} \in \C^{n_i}$ for $k=1,\ldots, d$ and $i=1,\dots, r$ and $\otimes$ denotes outer product. Similarly, for a real-valued tensor $\mathcal{F} \in \Br^{n_1\times n_2\times \cdots \times n_{d}}$, its CP-rank in the real domain (denoted as $\rank_{CP}^{\Br}(\mathcal {F})$) is the smallest integer $r$ such that there exists a real-valued decomposition \eqref{formula:asymemtric-decomp} with $a^{k,i} \in \Br^{n_i}$ for $k=1,\ldots, d$ and $i=1,\dots, r$.
\end{definition}

An extreme case is $r=1$, where $\F$ is called a rank-$1$ tensor in this case.
For a given tensor, finding its best rank-one approximation, also known as finding the largest eigenvalue of a given tensor, has been studied in \cite{Lim05,Q05,DeLathauwer-2000-best-rank-1,KM11,Jiang-Ma-Zhang-TPCA-2013}.
It should be noted that the CP-rank of a real-valued tensor can be different over $\Br$ and $\C$; i.e., it may hold that $\rank_{CP}(\mathcal{F}) < \rank_{CP}^{\Br}(\mathcal{F})$
for a real-valued tensor $\mathcal{F}$. For instance, a real-valued $2 \times 2 \times 2$ tensor $\mathcal{F}$ is given in~\cite{Kruskal1983} and it can be shown that $\rank_{CP}^{\Br}(\mathcal {F})=3$ while 
$\rank_{CP}(\mathcal {F})=2$.
In this paper, we shall focus on the notion of CP-rank in the complex domain and discuss two low-CP-rank tensor recovery problems: tensor completion and robust tensor PCA.


Low-CP-rank tensor recovery problem seeks to recover a low-CP-rank tensor based on limited observations. This problem can be formulated as
\begin{equation}\label{prob:tensor-completion}
\min\limits_{\mathcal{X}\in\C^{n_1\times n_2\cdots \times n_{d}}} \ \rank_{CP}({\mathcal{X}}), \quad \st \ \linear(\mathcal{X}) = b,
\end{equation}
where $\linear: \C^{n_1\times n_2\cdots \times n_{d}} \to \C^p$ is a linear mapping and $b\in\C^p$ denotes the observations of $\mathcal{X}$ under $\linear$. Low-CP-rank tensor completion is a special case of \eqref{prob:tensor-completion} where the linear mapping in the constraint picks certain entries of the tensor. The low-CP-rank tensor completion can be formulated as
\begin{equation}\label{prob:tensor-completion-special}
\min\limits_{\mathcal{X}\in\C^{n_1\times n_2\cdots \times n_{d}}} \ \rank_{CP}({\mathcal{X}}), \quad \st \ P_\Omega(\mathcal{X}) = P_\Omega(\mathcal{X}_0),
\end{equation}
where $\mathcal{X}_0\in\C^{n_1\times n_2\cdots \times n_{d}}$, $\Omega$ is an index set and
\[
[P_\Omega(\mathcal{X})]_{i_1,i_2,\ldots,i_d} = \left\{ \ba{ll} \mathcal{X}_{i_1,i_2,\ldots,i_d}, & \mbox{ if } (i_1,i_2,\ldots,i_d) \in \Omega, \\
                                                                0                      , & \mbox{ otherwise.}  \ea
\right.
\]

In practice, the underlying low-CP-rank tensor data $\mathcal{F}\in\C^{n_1\times n_2\cdots \times n_{d}}$ may be heavily corrupted by a sparse noise tensor $\mathcal{Z}$. To identify and remove the noise, one can solve the following robust tensor PCA problem:
\begin{equation}\label{prob:robust-tensor-recovery}
\min\limits_{\mathcal{Y},\mathcal{Z}\in \C^{n_1\times n_2\cdots \times n_{d}}} \ \rank_{CP}({\mathcal{Y}}) + \lambda\|\mathcal{Z}\|_0, \quad \st \ \mathcal{Y} + \mathcal{Z} = \mathcal{F},
\end{equation}
where $\lambda>0$ is a weighting parameter and $\|\mathcal{Z}\|_0$ denotes the number of nonzero entries of $\mathcal{Z}$.

Solving \eqref{prob:tensor-completion} and \eqref{prob:robust-tensor-recovery}, however, are nearly impossible 
in practice. In fact, determining the CP-rank of a given tensor is known to be NP-hard in general \cite{H90}.
Worse than an average NP-hard problem, computing the CP-rank for small size instances remains a difficult task.
For example, a particular $9 \times 9 \times 9$ tensor is cited in \cite{K89} and its CP-rank is only known to be in between $18$ and $23$ to this date.
The above low-CP-rank tensor recovery problems \eqref{prob:tensor-completion} and \eqref{prob:robust-tensor-recovery}
thus appear to be quite hopeless. One way out of this dismay is to approximate the CP-rank by some more reasonable objects.
Since computing the rank of a matrix is easy, a classical way for tackling tensor optimization problem is
to unfold the tensor into certain matrix and then resort to some well established solution methods for matrix-rank optimization. A typical matrix unfolding technique is the so-called mode-$n$ matricization \cite{KB09}. Specifically, for a given tensor $\F\in \C^{n_1\times n_2\times \cdots\times n_d}$, we denote its mode-$n$ matricization by $F(n)$, which is obtained by arranging the $n$-th index of $\F$ as the column index of $F(n)$ and merging other indices of $\F$ as the row index of $F(n)$.
The Tucker rank (or the mode-$n$-rank) of $\F$ is defined as the vector $(\rank(F(1)),\ldots,\rank(F(d)))$. For simplicity, the averaged Tucker rank
is often used, to be denoted as
\[\rank_n(\F) := \frac{1}{d}\sum_{j=1}^d \rank(F(j)). \]

As such, $\rank_n(\F)$  is much easier to compute than the CP-rank, since it is just the average of $d$ matrix ranks. Therefore, the following low-n-rank minimization model has been proposed for tensor recovery \cite{LMWY09,Gandy-Recht-Yamada-2011}:
\begin{equation}\label{prob:low-n-rank-min}
\min\limits_{\mathcal{X}\in\C^{n_1\times n_2\cdots \times n_{d}}} \ \rank_{n}({\mathcal{X}}), \quad \st \ \linear(\mathcal{X}) = b,
\end{equation}
where $\linear$ and $b$ are the same as the ones in \eqref{prob:tensor-completion}. Since minimizing the rank function is still difficult, it was suggested in \cite{LMWY09} and \cite{Gandy-Recht-Yamada-2011} to convexify the matrix rank function by the nuclear norm, which has become a common practice due to the seminal works on matrix rank minimization (see, e.g., \cite{Fazel-thesis-2002,Recht-Fazel-Parrilo-2007,Candes-Recht-2008,Candes-Tao-2009}). That is, the following convex optimization problem is solved instead of \eqref{prob:low-n-rank-min}:
\begin{equation}\label{prob:low-n-rank-min-nuclear}
\min\limits_{\mathcal{X}\in\C^{n_1\times n_2\cdots \times n_{d}}} \ \frac{1}{d}\sum_{j=1}^d \|X(j)\|_*, \quad \st \ \linear(\mathcal{X}) = b,
\end{equation}
where the nuclear norm $\|X(j)\|_*$ is defined as the sum of singular values of matrix $X(j)$.

However, to the best of our knowledge, the relationship between the CP-rank and the Tucker rank of a tensor is still unclear so far,
although it is easy to see (from similar argument as in Theorem~\ref{theorem:asymmetric-bound}) that the averaged Tucker rank is a lower bound for the CP-rank. 
In fact, there is a substantial gap between the averaged Tucker rank and the CP-rank. 
In Proposition~\ref{tight-example}, we present an $n\times n \times n \times n$ tensor whose CP-rank is $n$ times the averaged Tucker rank.
Moreover, in the numerical experiments we found two types of tensors whose CP-rank is strictly larger than the averaged Tucker rank; see Table~\ref{CP-approximation-tab1} and Table~\ref{CP-approximation-tab2}.
The theoretical guarantee of model~\eqref{prob:low-n-rank-min-nuclear} has been established in~\cite{Tomioka-Suzuki-Hayashi-NIPS-2011}, which states that if the number of observations is at the order of $O(rn^{d-1})$, then with high probability the original tensor with Tucker rank $(r,r,\cdots,r)$ can be successfully recovered by solving \eqref{prob:low-n-rank-min-nuclear}.
Therefore, model~\eqref{prob:low-n-rank-min-nuclear} and its variants have become popular in the area of tensor completion.
However, our numerical results show that unless the CP-rank is extremely small, \eqref{prob:low-n-rank-min-nuclear} usually fails to recover the tensor; see Table~\ref{tensor-completion-tab1}.
As a different tensor unfolding technique, the square unfolding was proposed by Jiang \etal\cite{Jiang-Ma-Zhang-TPCA-2013} for the tensor rank-one approximation problem.
This technique was also considered by Mu \etal \cite{Mu-Huang-Wright-Goldfarb-2013} for tensor completion problem. 
Mu \etal \cite{Mu-Huang-Wright-Goldfarb-2013} showed that when the square unfolding is applied, the number of observations required to recover the tensor is of the order of $O(r^{\lfloor \frac{d}{2}\rfloor}n^{\lceil \frac{d}{2}\rceil})$, which is significantly less than that required by \eqref{prob:low-n-rank-min-nuclear}.
Recently, Yuan and Zhang \cite{YuanZhang2016} proposed to minimize the tensor nuclear norm for tensor completion and the sample size requirement for successful recovery can be further reduced to $O(r^{ \frac{1}{2}}n^{ \frac{d}{2}}\log(n))$. However, the corresponding optimization problem is computationally intractable as computing tensor nuclear norm itself is NP-hard \cite{FriedlandLim2018}. To alleviate this difficulty, Barak and Moitra \cite{BoazMoitra2016} proposed a polynomial time algorithm based on the sixth level of the sum-of-squares hierarchy and the required sample size is almost as low as the one for the exact tensor nuclear norm minimization. However, this algorithm is more of a theoretical interest as the size of resulting SDP is exceedingly large. For instance, to recover an $n \times n \times n$ tensor, the matrix variable could be of the size $n^3 \times n^3$, which is undoubtedly polynomial in $n$ but is already very challenging to solve when $n \ge 20$.
%


Note that in the definition of CP-rank, $\F\in\C^{n_1\times n_2\times\cdots\times n_d}$ is decomposed into the sum of asymmetric rank-one tensors. If $\F$ is a super-symmetric tensor, i.e., its component is invariant under any permutation of the indices, then a natural extension is to decompose $\F$ into the sum of symmetric rank-one tensors, and this leads to the definition of symmetric CP-rank (see, e.g., \cite{Comon2008}).
\begin{definition}\label{def:complex-sym-cp-rank}
Given a $d$-th order $n$-dimensional super-symmetric complex-valued tensor $\mathcal{F}$, its symmetric CP-rank (denoted by $\rank_{SCP}(\mathcal {F}) $) is the smallest integer $r$ such that
\begin{equation*}
\mathcal{F} = \sum_{i=1}^{r} \underbrace{a^i\otimes \cdots \otimes a^i}_{d},
\end{equation*}
with $a^i \in \C^n, i=1,\ldots,r$.
\end{definition}

It is obvious that 
$\rank_{CP}(\F) \leq \rank_{SCP}(\F)$ for any given super-symmetric tensor $\F$. In the matrix case, i.e., when $d=2$, it is known that the rank and symmetric rank are identical. {However, in the higher order case,
whether or not the CP-rank equals the symmetric CP-rank has remained an interesting and challenging open problem, known as Comon's conjecture \cite{Comon2008}. Very recently a counterexample disproving Comon's conjecture was claimed; see~\cite{Shitov2017}. Some earlier attempts to settle Comon's conjecture include \cite{Zhang2014,Friedland2016}.}

At this point, it is important to remark that the CP-rank stems from the idea of decomposing a general tensor into a sum of simpler -- viz.\ {\it rank-one}\/ in this context -- tensors. The nature of the ``simpler components'' in the sum, however, can be made flexible and inclusive. In fact, in many cases it does not have to be a rank-one tensor as in the CP-decomposition.
In particular, for a $2d$-th order tensor, the ``simple tensor'' being decomposed into could be the outer product of two tensors with lower degree, which is $d$ in this paper, and we call this new decomposition the {\it M-decomposition}. It is easy to see (will be discussed in more details later) that after square unfolding, each term in the M-decomposition is actually a rank-one matrix. Consequently, the minimum number of such simple terms can be regarded as a {\it rank}, or indeed the {\it M-rank} in our context, to be differentiated from other existing notions of tensor ranks.
By imposing further symmetry on the ``simple tensor'' that composes the M-decomposition,
the notion of symmetric M-decomposition (symmetric M-rank), and strongly symmetric M-decomposition (strongly symmetric M-rank) naturally follow. We will introduce the formal definitions later. The merits of the M-rank are twofold. First, for some structured tensors, we can show, through either theoretical analysis or numerical experimentations, that the M-rank is much better than the averaged Tucker rank in terms of approximating the CP-rank. Second, for low-CP-rank tensor recovery problems, the low-M-rank approach can improve the recoverability and our numerical tests suggest that the M-rank remain a good approximation to the CP-rank even in the presence of some gross errors.

{\subsection{Related Work}
	
For tensor completion problems, efficient algorithms such as alternating direction method of multipliers and Douglas-Rachford operator splitting methods were proposed in \cite{LMWY09} and \cite{Gandy-Recht-Yamada-2011} to solve \eqref{prob:low-n-rank-min-nuclear}.  Mu \etal \cite{Mu-Huang-Wright-Goldfarb-2013} suggested to minimize the nuclear norm of a matrix obtained by a square unfolding of the original tensor. The idea of square unfolding also appeared earlier in \cite{Jiang-Ma-Zhang-TPCA-2013} for tensor eigenvalue problem.
Recently, more sophisticated tensor norms are proposed (see \cite{YuanZhang2016,BoazMoitra2016}) to approximate the CP-rank in tensor completion problems. However, these norms are often computationally expensive or even intractable \cite{FriedlandLim2018}. Recently, matrix (tensor) completion algorithms have been used to find a background scene known as the background-initialization problem.
In \cite{SobralZahzah2017,SobralBouwmansZahzah2015}, many matrix completion and tensor completion algorithms are comparatively evaluated under the Scene Background Initialization data set. More recently, a spatiotemporal low-rank modeling method \cite{JavedMahmoodBouwmansJung2016} and SVD-based tensor-completion technique \cite{KajoKamelRuichekMalik2018} were further studied with the aim to enhance the performance of the background-initialization.

There exist comprehensive reviews for matrix robust PCA with a comparative evaluation in surveillance video data \cite{BouwmansZahzah2014,BouwmansEtal2017}. Convexifying the robust tensor PCA problem \eqref{prob:robust-tensor-recovery} was also studied by Tomioka \etal \cite{Tomioka-Hayashi-Kashima-2011} and Goldfarb and Qin \cite{Goldfarb-Qin-tensor-2013}. Specifically, they used the averaged Tucker rank to replace the CP-rank of $\mathcal{Y}$ and $\|\mathcal{Z}\|_1$ to replace $\|\mathcal{Z}\|_0$ in the objective of \eqref{prob:robust-tensor-recovery}. However, we observe from our numerical experiments (see Table~\ref{tensor-rpca-tab}) that this model cannot recover $\mathcal{Y}$ well when
$\mathcal{Y}$ is of low CP-rank. Other works on this topic include \cite{Signoretto-2011,Kressner-Steinlechner-Vandereycken-2013,Krishnamurthy-Singh-2013}. Specifically, \cite{Signoretto-2011} compared the performance of the convex relaxation of the low-n-rank minimization model and the low-rank matrix completion model on applications in spectral image reconstruction. \cite{Kressner-Steinlechner-Vandereycken-2013} proposed a Riemannian manifold optimization algorithm for finding a local optimum of the Tucker rank constrained optimization problem. \cite{Krishnamurthy-Singh-2013} studied some adaptive sampling algorithms for low-rank tensor completion.
There have been many recent works \cite{LiWangHuCai2016,ZhouZhangPeng2016,XiaSunChen2018,LuEtal2016,XieZhaoMengXu2017} on background subtraction of surveillance video via robust tensor PCA. In particular, an Outlier Iterative Removal algorithm \cite{LiWangHuCai2016} was proposed to reduce the video size and extract the background from the discriminative frame set. \cite{ZhouZhangPeng2016} used the low-rank approximation to exploit the inherent correlation of tensor data. In \cite{XiaSunChen2018}, an adaptive rank adjustment procedure was incorporated in the regularized tensor decomposition method to obtain accurate background component when the scene is changing at different time or places. Recently, t-SVD based tensor rank \cite{ZhangAeron2017,LuEtal2016} and KBR based tensor rank \cite{XieZhaoMengXu2017} have been proposed and applied in both tensor completion and robust tensor PCA. In \cite{ZhangAeron2017,LuEtal2016}, the tensor was transformed to the Fourier domain through the fast Fourier transform, and then the decomposition was conducted in the Fourier domain. The KBR based tensor rank \cite{XieZhaoMengXu2017} promoted the low-rankness of the target tensor via the sparsity of the associated core tensor. These two newly proposed tensor norms were well justified by the surveillance video examples (see the numerical results in \cite{ZhangAeron2017,LuEtal2016,XieZhaoMengXu2017} and also Figures \ref{CompletionPics70Missing} and \ref{CompletionPics80Missing} in this paper).
The interested readers are referred to LRSLibrary \cite{SobralBouwmansZahzah2016-LRSLibrary} (https://github.com/andrewssobral/lrslibrary) for a nice summary of many state-of-the-art algorithms for matrix (tensor) completion and robust PCA.
}

\subsection{Our Contributions}
The main contributions of this paper are as follows. 
First, we introduce several new notions of tensor decomposition for even-order tensors, followed by the new notions of tensor M-rank, symmetric M-rank and strongly symmetric M-rank. Second, we prove the equivalence of these three rank definitions for even-order super-symmetric tensors. Third, we establish the connection between these new ranks and the CP-rank and symmetric CP-rank. Specifically, we show that for a fourth-order tensor, both the CP-rank and the symmetric CP-rank can be lower and upper bounded (up to a constant factor) by the M-rank, and the bound is tight for asymmetric tensors. As a byproduct, we present a class of tensors whose CP-rank can be exactly computed easily. Finally, we solve low-M-rank tensor completion and low-M-rank robust tensor PCA problems for both synthetic and real data. The results demonstrate that our low-M-rank approach outperforms the existing low-n-rank approach, which further confirm that M-rank is a much better approximation to CP-rank.




{\bf Notation.} We use $\C^n$ to denote the $n$-dimensional complex-valued vector space. We adopt calligraphic letter to denote a tensor, i.e.\ $\mathcal{A}=(\mathcal{A}_{i_1i_2\cdots i_d})_{n_1\times n_2\times \cdots\times n_d}$. $\C^{n_1\times n_2\times \cdots\times n_d}$ denotes the space of $d$-th order $n_1\times n_2\times \cdots\times n_d$ dimensional complex-valued tensor. $\pi(i_1,i_2,\cdots,i_d)$ denotes a permutation of indices $(i_1,i_2,\cdots,i_d)$. We use $\ACal_\pi$ to denote the tensor obtained by permuting the indices of $\ACal$ according to permutation $\pi$. Formally speaking, a tensor $\F \in \C^{n_1\times n_2\times \cdots\times n_d}$ is called super-symmetric, if $n_1=n_2=\ldots=n_d$ and $\F = \F_\pi$ for any permutation $\pi$. The space where $\underbrace{n\times n \times \cdots \times n}_{d}$ super-symmetric tensors reside is denoted by $\SI^{n^d}$. We use $\otimes$ to denote the outer product of two tensors; that is, for $\mathcal{A}_1\in \C^{n_1\times n_2\times \cdots\times n_{d}}$ and $\mathcal{A}_2\in \RR^{n_{d+1}\times n_{d+2} \times \cdots\times n_{d+\ell}}$, $\mathcal{A}_1 \otimes \mathcal{A}_2\in\C^{n_1\times n_2\times \cdots\times n_{d+\ell}}$ and
$
(\mathcal{A}_1 \otimes \mathcal{A}_2)_{i_1i_2\cdots i_{d+\ell}} = (\mathcal{A}_1)_{i_1i_2\cdots i_d} (\mathcal{A}_2)_{i_{d+1}\cdots i_{d+\ell}}.
$

\section{M-Rank, Symmetric M-Rank and Strongly Symmetric M-Rank}

In this section, we shall introduce 
the M-decomposition (correspondingly M-rank), the symmetric M-decomposition (correspondingly symmetric M-rank), and the strongly symmetric M-decomposition (correspondingly strongly symmetric M-rank) for tensors, which will be used to provide lower and upper bounds for the CP-rank and the symmetric CP-rank.

\subsection{The M-rank of Even-Order Tensor}
The M-decomposition of an even-order tensor is defined as follows.
\begin{definition}For an even-order tensor $\mathcal{F} \in \C^{n_{1}\times n_{2}\cdots \times n_{2d}}$, the M-decomposition is to find some tensors $\mathcal{A}^i \in \C^{n_{1}\times \cdots \times n_{d}}$, $\mathcal{B}^i \in \C^{n_{{d+1}}\times \cdots \times n_{{2d}}}$ with $i=1,\ldots,r$ such that
\begin{equation} \label{M-decom}
\mathcal{F} = \sum_{i=1}^{r} \mathcal{A}^i\otimes \mathcal{B}^i.
\end{equation}
\end{definition}

The motivation for studying this decomposition stems from 
the following novel matricization technique called square unfolding that has been considered in \cite{Jiang-Ma-Zhang-TPCA-2013,Mu-Huang-Wright-Goldfarb-2013,Yang-Feng-Huang-Suykens-2014}.

\begin{definition}\label{def:matrix-unfolding}
The square unfolding of an even-order tensor $\mathcal{F} \in \C^{n_{1}\times n_{2}\cdots \times n_{2d}}$ (denoted by $\matr(\mathcal{F})\in \C^{(n_1\cdots n_{d})\times(n_{d+1}\cdots n_{2d})}$) is a matrix that is defined as
$$
\matr(\mathcal{F})_{k \ell}:=\mathcal{F}_{i_1\cdots i_{2d}},
$$
where
$$
k = \sum\limits_{j=2}^d (i_j-1)\prod\limits_{q=1}^{j-1}n_{q} +i_1,\, 1 \le i_j \le n_j ,\, 1 \le  j \le d,
$$
$$
\ell =\sum\limits_{j=d+2}^{2d} (i_j-1)\prod\limits_{q=d+1}^{j-1}n_{q} +i_{d+1},\, 1 \le i_j \le n_j ,\, d+1 \le  j \le 2d.
$$
\end{definition}
%

In Definition \ref{def:matrix-unfolding}, the square unfolding merges $d$ indices of $\F$ as the row index of $\matr(\F)$, and merges the other $d$ indices of $\F$ as the column index of $\matr(\F)$. In this pattern of unfolding, we observe that the M-decomposition~\eqref{M-decom} can be rewritten as
\[
\matr(\mathcal{F}) = \sum_{i=1}^{r} \mathbf{a}^i (\mathbf{b}^i)^\top,
\]
where ${a}^i  = \vect(\mathcal{A}^i)$, ${b}^i  = \vect(\mathcal{B}^i)$ for $i=1,\ldots,r$, and $\vect(\cdot)$ is the vectorization operator. Specifically, for a given tensor $\mathcal{F} \in C^{n_1\times n_2\cdots \times n_{d}}$, $\vect(\mathcal{F})_{k}:=\mathcal{F}_{i_1\cdots i_{d}}$ with
\[k = \sum\limits_{j=2}^d (i_j-1)\prod\limits_{q=1}^{j-1}n_{q} +i_1, 1 \le i_j \le n_j ,1 \le  j \le d.\]
Therefore, the M-decomposition of $\F$ is exactly the rank-one decomposition of the matrix $\matr(\mathcal{F})$.
Apparently, unless $\F$ is super-symmetric, there are different ways to unfold the tensor $\F$ by permuting the $2d$ indices. Taking this into account, we now define two types of M-rank (namely, \ M$^+$-rank and M$^-$-rank) of an even-order tensor as follows.

\begin{definition}\label{def:asymmetric-matrix-rank} Given an even-order tensor $\mathcal{F} \in \C^{n_{1}\times n_{2}\cdots \times n_{2d}}$, its
$M^-$-rank (denoted by $\rank_{M^-}(\mathcal {F})$) is the smallest rank of all possible square unfolding matrices, i.e.,
\begin{equation}\label{asymmetric-matrix-decomposition-}
\rank_{M^-}(\mathcal {F}) = \min\limits_{\pi \in \Pi(1,\ldots,2d)}\rank \left(\matr(\mathcal{F}_{\pi})\right),
\end{equation}
where $\Pi(1,\ldots,2d)$ denotes the set of all possible permutations of indices $(1,\ldots,2d)$, and $\mathcal{F}_{\pi}$ is the tensor obtained by permuting the indices of $\F$ according to permutation $\pi$.
In other words, $\rank_{M^-}(\mathcal {F})$ is the smallest
integer $r$ such that
$\mathcal{F_{\pi}} = \sum_{i=1}^{r} \mathcal{A}^i\otimes \mathcal{B}^i,$
holds for some permutation $\pi \in \Pi(1,\ldots,2d)$,
$\mathcal{A}^i \in \C^{n_{i_1}\times \cdots \times n_{i_d}}$, $\mathcal{B}^i \in \C^{n_{i_{d+1}}\times \cdots \times n_{i_{2d}}}$ with $(i_1,\cdots,i_{2d})=\pi(1,\cdots, 2d)$, $i=1,\ldots,r$.
Similarly, the M$^+$-rank (denoted by $\rank_{M^+}(\mathcal {F})$) is defined as the largest rank of all possible square unfolding matrices:
$\rank_{M^+}(\mathcal {F}) = \max\limits_{\pi \in \Pi(1,\ldots,2d)}\rank \left(\matr(\mathcal{F}_{\pi})\right).$
\end{definition}

\subsection{Symmetric M-rank and Strongly Symmetric M-Rank of Even-Order Super-Symmetric Tensor}

Note that if $\mathcal{F}$ is an even-order super-symmetric tensor, $\rank_{M^+}(\mathcal {F}) = \rank_{M^-}(\mathcal {F}) $. In this case, we can simplify the notation without causing any confusion by using $\rank_{M}(\mathcal {F})$ to denote the M-rank of $\mathcal{F}$.

As we mentioned earlier, the decomposition~\eqref{M-decom} is essentially based on the matrix rank-one decomposition of the matricized tensor. In the matrix case, it is clear that there are different ways to decompose a symmetric matrix; for instance,
\[
a b^\top + b a^\top =  \frac{1}{2} (a+b) (a+b)^\top - \frac{1}{2} (a-b) (a-b)^\top.
\]
In other words, a given symmetric matrix may be decomposed as a sum of {\it symmetric}\/ rank-one terms, as well as a sum of {\it non-symmetric}\/ rank-one terms, however yielding the same rank: the minimum number of respective decomposed terms. A natural question arises when dealing with tensors: Does the same property hold for the super-symmetric tensors? The M-decomposition of a tensor is in fact subtler: the decomposed terms can be restricted to symmetric products, and they can also be further restricted to be super-symmetric.

Therefore, we can define the symmetric M-rank and the strongly symmetric M-rank for even-order super-symmetric tensor as follows.
\begin{definition}\label{def:symmetric-matrix-rank}
For an even-order super-symmetric tensor $\mathcal{F} \in \CSI^{n^{2d}}$, its symmetric M-decomposition is defined as
\begin{equation}\label{formula:symmetric-matrix-rank}
\mathcal{F} = \sum_{i=1}^{r}\mathcal{B}^i\otimes \mathcal{B}^i, \quad \mathcal{B}^i \in \C^{n^d},i=1,\ldots,r.
\end{equation}
The symmetric M-rank of $\mathcal {F}$ (denoted by $\rank_{SM}(\mathcal {F})$) is the rank of the symmetric matrix $\matr(\mathcal{F})$,
i.e., $\rank_{SM}(\mathcal {F}) = \rank\left(\matr(\mathcal{F})\right) = \rank_M\left(\mathcal {F}\right)$; or equivalently $\rank_{SM}(\F)$ is the smallest integer $r$ such that \eqref{formula:symmetric-matrix-rank} holds.

In a similar vein, the strongly symmetric M-decomposition is defined as
\begin{equation}\label{formula:strongly-symmetric-matrix-rank}
\mathcal{F} = \sum_{i=1}^{r}\mathcal{A}^i\otimes \mathcal{A}^i,\quad \mathcal{A}^i \in \CSI^{n^d}, i=1,\ldots,r,
\end{equation}
and the strongly symmetric M-rank of $\mathcal {F}$ (denoted by $\rank_{SSM}(\mathcal {F})$) is defined as the smallest integer $r$ such that \eqref{formula:strongly-symmetric-matrix-rank} holds.
\end{definition}

The fact that the M-rank and the symmetric M-rank of an even-order super-symmetric tensor are always equal follows from the similar property of the symmetric matrices. (Note however the M-decompositions may be different). Interestingly,
we can show that $\rank_{SM}(\F)=\rank_{SSM}(\F)$ for any even-order super-symmetric tensor $\F$, which appears to be a new property of the super-symmetric even-order tensors.

\subsection{Equivalence of Symmetric M-Rank and Strongly Symmetric M-Rank}\label{sec:symmetric-rank-equivalent}

To show the equivalence of 
the symmetric M-rank and the strongly symmetric M-rank, we need to introduce the concept of partial symmetric tensors and some lemmas first.

\begin{definition}\label{def:partial-sym}
We say a tensor $\mathcal{F} \in \C^{n^d}$ partial symmetric with respect to indices $\{1,\ldots,m\}$, $m<d$, if
$$
\mathcal{F}_{i_1,\ldots, i_m, i_{m+1},\ldots, i_d} = \mathcal{F}_{\pi(i_1,\ldots, i_m), i_{m+1},\ldots, i_d}, \quad \forall \pi \in \Pi(1,\ldots,m).
$$
We use $\pi_{i,j} \in \Pi(1,\cdots,d)$ to denote the specific permutation that exchanges the $i$-th and the $j$-th indices and keeps other indices unchanged.
\end{definition}

The following result holds directly from Definition \ref{def:partial-sym}.
\begin{lemma}\label{lemma:symmetric+1} Suppose tensor $\mathcal{F} \in \C^{n^d}$ is partial symmetric with respect to indices $\{1,\ldots, m\}$, $m < d$. Then the tensor
\begin{equation*}
\mathcal{F}  + \sum\limits_{j=1}^{m}\mathcal{F}_{\pi_{j,m+1}}
\end{equation*}
is partial symmetric with respect to indices $\{1,\ldots,m+1\}$.
Moreover, it is easy to verify that for $\ell \le k \le m$,
\begin{eqnarray}
  & \left( \sum\limits_{j=1}^{k}\left(\mathcal{F}  - \mathcal{F}_{\pi_{j,m+1}}\right)\right)_{\pi_{\ell,m+1}} \nonumber \\
= & k\cdot\mathcal{F}_{\pi_{\ell,m+1}} - \sum\limits_{j\neq \ell} \mathcal{F}_{\pi_{j,m+1}} - \mathcal{F} \nonumber  \\
= & -k\left(\mathcal{F} -\mathcal{F}_{\pi_{\ell,m+1}} \right) + \sum\limits_{j\neq \ell} \left(\mathcal{F}-\mathcal{F}_{\pi_{j,m+1}}\right). \label{formula:tensor-skewness}
\end{eqnarray}
\end{lemma}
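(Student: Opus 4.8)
The entire lemma turns on a single computation: how a transposition $\pi_{\ell,m+1}$ with $\ell\in\{1,\ldots,m\}$ acts on each building block $\mathcal{F}_{\pi_{j,m+1}}$. The plan is to record these elementary facts first and then assemble both claims from them. I would fix the convention (consistent with the notation section) that $\mathcal{F}_\pi$ relabels index positions, so that composing two such relabelings composes the underlying permutations. For $j=\ell$ the transposition is its own inverse, hence $(\mathcal{F}_{\pi_{\ell,m+1}})_{\pi_{\ell,m+1}}=\mathcal{F}$. For distinct $j,\ell\in\{1,\ldots,m\}$, the composition $\pi_{\ell,m+1}\circ\pi_{j,m+1}$ is the $3$-cycle $(m+1\ j\ \ell)$, which factors as $\pi_{j,m+1}\circ\pi_{j,\ell}$; since $j,\ell\le m$ and $\mathcal{F}$ is partial symmetric with respect to $\{1,\ldots,m\}$, the inner transposition $\pi_{j,\ell}$ leaves $\mathcal{F}$ unchanged, and therefore
\[
(\mathcal{F}_{\pi_{j,m+1}})_{\pi_{\ell,m+1}} = \mathcal{F}_{\pi_{j,m+1}}, \qquad j\neq\ell,\ \ j,\ell\le m.
\]
This identity — the $(m+1)$-st slot of an already-swapped block is absorbed by the symmetry among the first $m$ slots — is the linchpin of the whole lemma.

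For the first claim I would set $G:=\mathcal{F}+\sum_{j=1}^m\mathcal{F}_{\pi_{j,m+1}}$ and verify directly that $G_{\pi_{\ell,m+1}}=G$ for every $\ell\in\{1,\ldots,m\}$: applying $\pi_{\ell,m+1}$ sends the leading $\mathcal{F}$ to $\mathcal{F}_{\pi_{\ell,m+1}}$, sends the $j=\ell$ term back to $\mathcal{F}$, and fixes every $j\neq\ell$ term by the displayed identity, so the collection of summands is merely reshuffled and $G$ returns to itself. Together with the invariance of $G$ under permutations of $\{1,\ldots,m\}$ (each $\tau\in S_m$ fixes $\mathcal{F}$ and permutes the blocks $\mathcal{F}_{\pi_{j,m+1}}$ among themselves via $j\mapsto\tau(j)$), and the fact that $S_m$ and the transpositions $\pi_{\ell,m+1}$ generate the full symmetric group on $\{1,\ldots,m+1\}$, this yields partial symmetry with respect to $\{1,\ldots,m+1\}$. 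A slicker phrasing of the same point: $\{e,\pi_{1,m+1},\ldots,\pi_{m,m+1}\}$ is a transversal of $S_m$ in $S_{m+1}$, so $G=\sum_{\sigma}\mathcal{F}_\sigma$ runs over the cosets $S_{m+1}/S_m$; since $\mathcal{F}$ is $S_m$-invariant, $\mathcal{F}_\sigma$ depends only on $\sigma S_m$, and any $\rho\in S_{m+1}$ merely permutes these cosets, forcing $G_\rho=G$.

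For the second claim, the first equality comes from applying $\pi_{\ell,m+1}$ term by term to $\sum_{j=1}^k(\mathcal{F}-\mathcal{F}_{\pi_{j,m+1}})$ and invoking the same two facts: each of the $k$ copies of $\mathcal{F}$ becomes $\mathcal{F}_{\pi_{\ell,m+1}}$, giving $k\,\mathcal{F}_{\pi_{\ell,m+1}}$; the $j=\ell$ block contributes $-\mathcal{F}$; and each $j\neq\ell$ block is unchanged, contributing $-\sum_{j\neq\ell}\mathcal{F}_{\pi_{j,m+1}}$ (the sum over $1\le j\le k$, $j\neq\ell$). The second equality is then pure bookkeeping: expanding $-k(\mathcal{F}-\mathcal{F}_{\pi_{\ell,m+1}})+\sum_{j\neq\ell}(\mathcal{F}-\mathcal{F}_{\pi_{j,m+1}})$ and using that $\{1,\ldots,k\}\setminus\{\ell\}$ has $k-1$ elements collapses $-k\mathcal{F}+(k-1)\mathcal{F}$ to $-\mathcal{F}$, reproducing the middle expression.

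The only genuine obstacle is the linchpin identity, and specifically pinning down that $\pi_{\ell,m+1}\circ\pi_{j,m+1}$ reduces to $\pi_{j,m+1}$ after absorbing $\pi_{j,\ell}$; everything else is linear reshuffling and counting. I would therefore fix one action convention at the outset and verify the $3$-cycle factorization once (sanity check with $d=3$, $m=2$: the $(a,b,c)$-entry of $\mathcal{F}_{\pi_{1,3}}$ is $\mathcal{F}_{cba}$, and applying $\pi_{2,3}$ produces the tensor with $(a,b,c)$-entry $\mathcal{F}_{bca}=\mathcal{F}_{cba}$ by symmetry in the first two indices, recovering $\mathcal{F}_{\pi_{1,3}}$), after which both parts follow mechanically.
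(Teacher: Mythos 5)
Your proof is correct. The paper offers no proof of this lemma at all --- it is introduced with ``The following result holds directly from Definition 3.1,'' and the displayed identity is left as ``easy to verify'' --- and your argument supplies exactly the verification being taken for granted: the absorption identity $(\mathcal{F}_{\pi_{j,m+1}})_{\pi_{\ell,m+1}}=\mathcal{F}_{\pi_{j,m+1}}$ for distinct $j,\ell\le m$ is the one nontrivial fact, and both claims then follow by the reshuffling and counting you describe, with the coset/generation observation (that $S_m$-invariance together with invariance under the transpositions $\pi_{\ell,m+1}$ forces invariance under all of $S_{m+1}$) correctly closing the gap between ``invariant under each $\pi_{\ell,m+1}$'' and ``partial symmetric with respect to $\{1,\ldots,m+1\}$.'' One cosmetic caveat: the factorization $\pi_{\ell,m+1}\circ\pi_{j,m+1}=\pi_{j,m+1}\circ\pi_{j,\ell}$ is tied to one of the two standard conventions for how $\mathcal{F}_\pi$ composes (under the mirror convention the symmetrizing transposition must be peeled off on the other side), but since you fix a convention at the outset and your index-level check --- the $d=3$, $m=2$ computation, which generalizes verbatim --- is convention-independent, this does not affect correctness.
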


We are now ready to prove the following key lemma.
\begin{lemma}\label{lemma:symmetric-rank-equivalence} Suppose $\mathcal{F} \in \CSI^{n^{2d}}$ and
$\mathcal{F}=\sum_{i=1}^{r} \mathcal{B}^i \otimes \mathcal{B}^i$ with $\mathcal{B}^i \in \C^{n^{d}}$ is partial symmetric with respect to $\{1,\ldots,m\}, m < d$.
Then there exist tensors $\mathcal{A}^i \in  \C^{n^{d}}$, which is partial symmetric with respect to $\{1,\ldots,m+1\}$, for $i=1,\ldots, r$, such that
$$
\mathcal{F}=\sum_{i=1}^{r} \mathcal{A}^i \otimes \mathcal{A}^i.
$$
\end{lemma}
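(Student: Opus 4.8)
The plan is to produce the new tensors by \emph{symmetrizing} each $\mathcal{B}^i$ in its $(m+1)$-th index against its first $m$ indices, and then to show that the resulting decomposition still reproduces $\mathcal{F}$ because $\mathcal{F}$ is super-symmetric. Concretely, I would set
\begin{equation*}
\mathcal{A}^i := \frac{1}{m+1}\left(\mathcal{B}^i + \sum_{j=1}^{m}\mathcal{B}^i_{\pi_{j,m+1}}\right), \qquad i=1,\ldots,r.
\end{equation*}
Since each $\mathcal{B}^i$ is partial symmetric with respect to $\{1,\ldots,m\}$, Lemma~\ref{lemma:symmetric+1} applies verbatim and guarantees that each $\mathcal{A}^i$ is partial symmetric with respect to $\{1,\ldots,m+1\}$, which is exactly the symmetry class required of the output. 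Thus the partial-symmetry hypothesis on the $\mathcal{B}^i$ is spent entirely on legitimizing Lemma~\ref{lemma:symmetric+1}, and the only thing left to establish is the algebraic identity $\sum_{i=1}^{r}\mathcal{A}^i\otimes\mathcal{A}^i=\mathcal{F}$.

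The crux, which I would isolate first, is an invariance family driven by super-symmetry of $\mathcal{F}$ (not merely of the individual $\mathcal{B}^i$). For $j,k\in\{0,1,\ldots,m\}$, with the value $0$ read as ``apply no permutation'' (so $\mathcal{B}^i_{\pi_{0,m+1}}:=\mathcal{B}^i$), let $\rho_{j,k}$ be the permutation of the $2d$ positions that transposes positions $j$ and $m+1$ inside the first block $\{1,\ldots,d\}$ and, independently, transposes positions $d+k$ and $d+m+1$ inside the second block $\{d+1,\ldots,2d\}$. These two transpositions act on disjoint blocks, so $\rho_{j,k}$ is a genuine permutation, and super-symmetry gives $\mathcal{F}_{\rho_{j,k}}=\mathcal{F}$. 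Applying $\rho_{j,k}$ term-by-term to $\mathcal{F}=\sum_i\mathcal{B}^i\otimes\mathcal{B}^i$ — the first transposition acting on the left factor and the second on the right factor — yields
\begin{equation*}
\sum_{i=1}^{r}\mathcal{B}^i_{\pi_{j,m+1}}\otimes\mathcal{B}^i_{\pi_{k,m+1}}=\mathcal{F}, \qquad j,k\in\{0,1,\ldots,m\}.
\end{equation*}
With these $(m+1)^2$ identities in hand the verification is immediate: expanding
\begin{equation*}
\sum_{i=1}^{r}\mathcal{A}^i\otimes\mathcal{A}^i=\frac{1}{(m+1)^2}\sum_{j=0}^{m}\sum_{k=0}^{m}\left(\sum_{i=1}^{r}\mathcal{B}^i_{\pi_{j,m+1}}\otimes\mathcal{B}^i_{\pi_{k,m+1}}\right),
\end{equation*}
every inner sum equals $\mathcal{F}$, so the total collapses to $\frac{(m+1)^2}{(m+1)^2}\mathcal{F}=\mathcal{F}$.

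One may equivalently phrase the computation at the matrix level through the square unfolding emphasized earlier: writing $M=\matr(\mathcal{F})=\sum_i\vect(\mathcal{B}^i)\,\vect(\mathcal{B}^i)^\top$ and letting $P_j$ be the symmetric permutation matrix implementing $\pi_{j,m+1}$, super-symmetry gives $P_jM=MP_j=M$; then $\vect(\mathcal{A}^i)=T\,\vect(\mathcal{B}^i)$ for the averaging operator $T:=\frac{1}{m+1}\!\left(I+\sum_{j=1}^{m}P_j\right)=T^\top$, and $TM=MT=M$ immediately forces $\sum_i\vect(\mathcal{A}^i)\vect(\mathcal{A}^i)^\top=TMT^\top=M$. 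The genuine content, and the only place where care is needed, is the invariance step: one must be certain that transposing a single index within one factor alone — or distinct indices in the two factors — leaves the \emph{entire} sum equal to $\mathcal{F}$, which is precisely what super-symmetry of $\mathcal{F}$ supplies. I note that this direct symmetrization route does not require the skew-part relation~\eqref{formula:tensor-skewness}; that identity is relevant only to an alternative bookkeeping in which $\mathcal{B}^i$ is first split into symmetric and skew components.
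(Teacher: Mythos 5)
Your proof is correct, and it reaches the conclusion by a genuinely more direct route than the paper. You and the paper start identically: the same averaging operator $\mathcal{A}^i = \frac{1}{m+1}\bigl(\mathcal{B}^i + \sum_{j=1}^{m}\mathcal{B}^i_{\pi_{j,m+1}}\bigr)$, justified by Lemma~\ref{lemma:symmetric+1}. The difference is in the verification that $\sum_i \mathcal{A}^i\otimes\mathcal{A}^i=\mathcal{F}$. The paper splits $\mathcal{B}^i=\mathcal{A}^i+\sum_{j}\mathcal{C}^i_j$ with $\mathcal{C}^i_j=\frac{1}{m+1}(\mathcal{B}^i-\mathcal{B}^i_{\pi_{j,m+1}})$, uses only the four representations of $\mathcal{F}$ coming from transposing index $1$ with $m+1$ in either factor, combines them with weights $m^2$, $m$, $m$, $1$ (via the skewness identity \eqref{formula:tensor-skewness} of Lemma~\ref{lemma:symmetric+1}) to eliminate $\mathcal{C}^i_1$, and then iterates this elimination $m$ times, once per index. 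You instead observe that super-symmetry of $\mathcal{F}$ supplies the full family of $(m+1)^2$ identities $\sum_i \mathcal{B}^i_{\pi_{j,m+1}}\otimes\mathcal{B}^i_{\pi_{k,m+1}}=\mathcal{F}$ for all $j,k\in\{0,1,\ldots,m\}$ (including the crucial mixed cases $j\neq k$, which come from permutations acting differently on the two index blocks), and then a single bilinear expansion of $\sum_i\mathcal{A}^i\otimes\mathcal{A}^i$ collapses to $\frac{(m+1)^2}{(m+1)^2}\mathcal{F}$. Your key invariance step is sound: a permutation of the $2d$ indices that transposes $j\leftrightarrow m+1$ in the first block and $d+k\leftrightarrow d+m+1$ in the second block acts term-by-term on $\sum_i\mathcal{B}^i\otimes\mathcal{B}^i$ exactly as you claim, and super-symmetry makes $\mathcal{F}$ invariant under it. What your route buys is brevity and transparency: no $\mathcal{C}^i_j$ bookkeeping, no skewness identity, no induction over indices, and a clean matrix-level restatement ($TMT^\top = M$ for the symmetric averaging projector $T$) that makes the result look like what it really is -- invariance of $\matr(\mathcal{F})$ under a symmetric idempotent-like averaging. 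The paper's iterative version, by contrast, isolates one index at a time, which makes each step elementary but at the cost of $m$ rounds of re-decomposition; there is no generality gained in exchange.
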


\begin{proof} Define $\mathcal{A}^i = \frac{1}{m+1}\left(\mathcal{B}^i  + \sum\limits_{j=1}^{m}\mathcal{B}^i_{\pi_{j,m+1}}\right)$. From Lemma \ref{lemma:symmetric+1} we know that
$\mathcal{A}^i$ is partial symmetric with respect to $\{1,\ldots,m+1\}$, for $i=1,\ldots, r$.
It is easy to show that
$$
\mathcal{B}^i = \mathcal{A}^i +  \sum\limits_{j=1}^{m}\mathcal{C}^i_j,\;\mbox{with}\quad \mathcal{C}^i_j = \frac{1}{m+1} \left(\mathcal{B}^i - \mathcal{B}^i_{\pi_{j,m+1}}\right).
$$
Because $\mathcal{F}$ is super-symmetric, we have $\mathcal{F} = \mathcal{F}_{\pi_{d+1,d+m+1}} = \mathcal{F}_{\pi_{1,m+1}} = (\mathcal{F}_{\pi_{1,m+1}})_{\pi_{d+1,d+m+1}} $, which implies
\begin{align}
\mathcal{F} & = \sum_{i=1}^{r} \mathcal{B}^i \otimes \mathcal{B}^i = \sum_{i=1}^{r} \mathcal{B}^i \otimes \mathcal{B}^i_{\pi_{1,m+1}} \nonumber \\ & =\sum_{i=1}^{r} \mathcal{B}^i_{\pi_{1,m+1}} \otimes \mathcal{B}^i  =\sum_{i=1}^{r} \mathcal{B}^i_{\pi_{1,m+1}} \otimes \mathcal{B}^i_{\pi_{1,m+1}}.\label{formula:decomposition-symmetric}
\end{align}
By using \eqref{formula:tensor-skewness}, we have
\begin{equation}\label{Lemma3.3-9}
\mathcal{B}^i_{\pi_{1,m+1}} = \left(\mathcal{A}^i +  \sum\limits_{j=1}^{m}\mathcal{C}^i_j \right)_{\pi_{1,m+1}} = \mathcal{A}^i +  \sum\limits_{j=2}^{m}\mathcal{C}^i_j - m \cdot\mathcal{C}^i_1.
\end{equation}
Combining \eqref{Lemma3.3-9} and \eqref{formula:decomposition-symmetric} yields
\begin{align}
 & \mathcal{F} \nonumber \\
=&  \sum_{i=1}^{r} \mathcal{B}^i \otimes \mathcal{B}^i = \sum_{i=1}^{r} (\mathcal{A}^i +  \sum\limits_{j=1}^{m}\mathcal{C}^i_j )\otimes (\mathcal{A}^i +  \sum\limits_{j=1}^{m}\mathcal{C}^i_j ) \label{equation-skewness1} \\
=& \sum_{i=1}^{r} \mathcal{B}^i \otimes \mathcal{B}^i_{\pi_{1,m+1}} \nonumber \\
=&  \sum_{i=1}^{r} (\mathcal{A}^i +  \sum\limits_{j=1}^{m}\mathcal{C}^i_j )\otimes ( \mathcal{A}^i +  \sum\limits_{j=2}^{m}\mathcal{C}^i_j - m \cdot\mathcal{C}^i_1 )\label{equation-skewness2} \\
=& \sum_{i=1}^{r} \mathcal{B}^i_{\pi_{1,m+1}} \otimes \mathcal{B}^i \nonumber \\
=& \sum_{i=1}^{r} ( \mathcal{A}^i +  \sum\limits_{j=2}^{m}\mathcal{C}^i_j - m \cdot\mathcal{C}^i_1 ) \otimes (\mathcal{A}^i +  \sum\limits_{j=1}^{m}\mathcal{C}^i_j )\label{equation-skewness3} \\
=& \sum_{i=1}^{r} \mathcal{B}^i_{\pi_{1,m+1}} \otimes \mathcal{B}^i_{\pi_{1,m+1}} \nonumber \\
=& \sum_{i=1}^{r} ( \mathcal{A}^i +  \sum\limits_{j=2}^{m}\mathcal{C}^i_j - m \cdot\mathcal{C}^i_1 ) \otimes  ( \mathcal{A}^i +  \sum\limits_{j=2}^{m}\mathcal{C}^i_j - m \cdot\mathcal{C}^i_1 ).\label{equation-skewness4}
\end{align}
It is easy to check that
$
\frac{\eqref{equation-skewness4}+ m\times \eqref{equation-skewness3}+m\times\eqref{equation-skewness2}+m^2\times\eqref{equation-skewness1} }{(1+m)^2}
$
yields
$\mathcal{F} = \sum_{i=1}^{r} \left(\mathcal{A}^i +  \sum\limits_{j=2}^{m}\mathcal{C}^i_j \right)\otimes \left(\mathcal{A}^i +  \sum\limits_{j=2}^{m}\mathcal{C}^i_j \right).
$
Then we repeat this procedure. That is, since $\F\in\CSI^{n^{2d}}$, we have $\mathcal{F} = \mathcal{F}_{\pi_{d+2,d+m+1}} = \mathcal{F}_{\pi_{2,m+1}} = (\mathcal{F}_{\pi_{2,m+1}})_{\pi_{d+2,d+m+1}}$. By letting $\mathcal{B}^i=\mathcal{A}^i +  \sum\limits_{j=2}^{d}\mathcal{C}^i_j$, we can apply the same procedure as above to obtain $\mathcal{F} = \sum_{i=1}^{r} \left(\mathcal{A}^i +  \sum\limits_{j=3}^{m}\mathcal{C}^i_j \right)\otimes \left(\mathcal{A}^i +  \sum\limits_{j=3}^{m}\mathcal{C}^i_j \right)$. We just repeat this procedure until
$\mathcal{F} = \sum_{i=1}^r \mathcal{A}^i \otimes \mathcal{A}^i$ and this completes the proof.
\end{proof}


Now we are ready to present the equivalence of symmetric M-rank and strongly symmetric M-rank.

\begin{theorem}\label{theorem:symmetric-rank-equivalence}
For an even-order super-symmetric tensor $\mathcal{F} \in \CSI^{n^{2d}}$, its M-rank, symmetric M-rank and strongly symmetric M-rank are the same, i.e. $\rank_{M}(\mathcal{F})=\rank_{SM}(\mathcal{F})=\rank_{SSM}(\mathcal{F})$.
\end{theorem}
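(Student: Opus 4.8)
The plan is to prove the two equalities $\rank_M(\mathcal{F}) = \rank_{SM}(\mathcal{F})$ and $\rank_{SM}(\mathcal{F}) = \rank_{SSM}(\mathcal{F})$ separately, with essentially all the real work concentrated in the second one. The first equality is already built into Definition~\ref{def:symmetric-matrix-rank}: since $\mathcal{F}$ is super-symmetric, its square unfolding $\matr(\mathcal{F})$ is a complex symmetric matrix, and the classical fact that any complex symmetric matrix of rank $r$ admits a decomposition $\matr(\mathcal{F}) = \sum_{i=1}^r b^i (b^i)^\top$ into exactly $r$ symmetric rank-one terms (congruence to $I_r \oplus 0$ over $\C$) guarantees that the smallest $r$ realizing \eqref{formula:symmetric-matrix-rank} coincides with $\rank(\matr(\mathcal{F})) = \rank_M(\mathcal{F})$. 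Re-folding each vector $b^i$ into a tensor $\mathcal{B}^i \in \C^{n^d}$ produces the symmetric M-decomposition, so $\rank_{SM} = \rank_M$ with no further effort.

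For the second equality, one inequality is immediate: every strongly symmetric M-decomposition \eqref{formula:strongly-symmetric-matrix-rank} is, in particular, a symmetric M-decomposition \eqref{formula:symmetric-matrix-rank}, obtained by forgetting the constraint $\mathcal{A}^i \in \CSI^{n^d}$. Hence $\rank_{SM}(\mathcal{F}) \le \rank_{SSM}(\mathcal{F})$. The substance of the theorem is the reverse inequality $\rank_{SSM}(\mathcal{F}) \le \rank_{SM}(\mathcal{F})$: starting from an \emph{optimal} symmetric M-decomposition with $r = \rank_{SM}(\mathcal{F})$ terms, I must manufacture a strongly symmetric one using no more than $r$ terms.

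This is exactly where Lemma~\ref{lemma:symmetric-rank-equivalence} is deployed as the inductive engine. Fix an optimal decomposition $\mathcal{F} = \sum_{i=1}^r \mathcal{B}^i \otimes \mathcal{B}^i$. Each factor $\mathcal{B}^i \in \C^{n^d}$ is trivially partial symmetric with respect to the singleton index set $\{1\}$, so the hypothesis of Lemma~\ref{lemma:symmetric-rank-equivalence} is met at $m=1$; this furnishes the base case at no cost. Applying the lemma yields, \emph{with the same number of terms} $r$, a decomposition whose factors are partial symmetric with respect to $\{1,2\}$. Iterating for $m = 1, 2, \ldots, d-1$ — each application enlarging the partial-symmetry index set by one while preserving the term count $r$ — after $d-1$ steps I obtain factors $\mathcal{A}^i$ that are partial symmetric with respect to $\{1,\ldots,d\}$, i.e.\ super-symmetric, $\mathcal{A}^i \in \CSI^{n^d}$. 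The resulting expression $\mathcal{F} = \sum_{i=1}^r \mathcal{A}^i \otimes \mathcal{A}^i$ is a strongly symmetric M-decomposition with $r$ terms, so $\rank_{SSM}(\mathcal{F}) \le r = \rank_{SM}(\mathcal{F})$, closing the chain of equalities.

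The only delicate point is that the super-symmetry of $\mathcal{F}$ — invoked inside the lemma through the identities $\mathcal{F} = \mathcal{F}_{\pi_{k,m+1}} = \mathcal{F}_{\pi_{d+k,d+m+1}}$ — is precisely what licenses each inductive step, and that the lemma genuinely conserves the term count rather than merely bounding a rank. Granting Lemma~\ref{lemma:symmetric-rank-equivalence}, the main obstacle is fully absorbed into that lemma, and the theorem reduces to a clean induction on $m$ with an automatic base case; I therefore expect the write-up to be short, with the burden of verification resting entirely on the preceding lemma.
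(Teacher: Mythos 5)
Your proposal is correct and follows essentially the same route as the paper: the first equality is read off from Definition~\ref{def:symmetric-matrix-rank} (resting on the classical fact that a complex symmetric matrix of rank $r$ admits an $r$-term symmetric rank-one decomposition), and the second is obtained by iterating Lemma~\ref{lemma:symmetric-rank-equivalence} from the trivial base case of partial symmetry with respect to $\{1\}$ up to full super-symmetry of the factors, preserving the term count at each step. Your accounting of $d-1$ iterations and your explicit remark that the lemma conserves the number of terms (rather than merely bounding a rank) match the paper's argument, which invokes the lemma ``at most $d$ times.''
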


\begin{proof} The equality $\rank_{M}(\mathcal{F})=\rank_{SM}(\mathcal{F})$ follows directly from the definition of symmetric M-rank. We thus only need to prove $\rank_{SM}(\mathcal{F})=\rank_{SSM}(\mathcal{F})$. Suppose $\rank_{SM}(\mathcal{F} ) = r$, which means there exist $\mathcal{B}^i \in \C^{n^d}, i=1,\ldots,r$, such that $\mathcal{F}  = \sum\limits_{i=1}^{r}\mathcal{B}^i \otimes \mathcal{B}^i$. By applying Lemma \ref{lemma:symmetric-rank-equivalence} at most $d$ times, we can find super-symmetric tensors $\mathcal{A}^i \in \CSI^{n^d}, i=1,\ldots,r$ such that
$\mathcal{F}  = \sum\limits_{i=1}^{r}\mathcal{A}^i \otimes \mathcal{A}^i$. Hence, we have $\rank_{SSM}(\mathcal{F}) \le r = \rank_{SM}(\mathcal{F})$. On the other hand, it is obvious that $\rank_{SM}(\mathcal{F}) \le \rank_{SSM}(\mathcal{F})$. Combining these two inequalities yields $\rank_{SM}(\mathcal{F})=\rank_{SSM}(\mathcal{F})$.
\end{proof}


\section{Bounding CP-Rank for Even-Order Tensor using M-rank}\label{sec:approximate-CP-rank}

In this section, we analyze the relation between the CP-rank and the M-rank. Specifically, for even-order tensor, we establish the equivalence between the symmetric CP-rank and the M-rank under the rank-one assumption. Then we particularly focus on the
fourth-order tensors, because many multi-dimensional data from real practice are in fact fourth-order tensors. For example, the colored video completion and decomposition problems considered in \cite{Gandy-Recht-Yamada-2011,Goldfarb-Qin-tensor-2013,LMWY09} can be formulated as low-CP-rank fourth-order tensor recovery problems. We show that the CP-rank and the symmetric CP-rank for fourth-order tensor can be both lower and upper bounded (up to a constant factor) by the corresponding M-rank.

\subsection{Rank-One Equivalence for Super-Symmetric Even-Order Tensor}

In our previous work \cite{Jiang-Ma-Zhang-TPCA-2013}, we already showed that if a super-symmetric even-order tensor $\mathcal{F}$ is real-valued and the decomposition is performed in the real domain, then $\rank_{CP}(\F) =1 \Longleftrightarrow \rank(\matr(\F))=1$. Here we show that a similar result can be established when $\mathcal {F}$ is complex-valued and the decomposition is performed in the complex domain. To see this, we first present the following lemma.
\begin{lemma}\label{lemma:rank-one}If a $d$-th order tensor $\mathcal{A} = a^1\otimes a^2 \otimes  \cdots \otimes a^d \in \CSI^{n^{d}}$ is super-symmetric, then we have {$a^i =c_i a^1$ for some $c_i \in \C, \forall \; i=2,\ldots,d$} and
$\mathcal{A} = \underbrace{b\otimes b \otimes  \cdots \otimes b}_{d}$ for some $b \in \C^{n}$.
\end{lemma}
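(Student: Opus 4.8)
The plan is to exploit the super-symmetry of $\mathcal{A}$ one transposition at a time, forcing each of $a^2,\dots,a^d$ to be parallel to $a^1$, and then to collapse the resulting scalar multiples into a single vector using the existence of $d$-th roots in $\C$. First I would dispose of the degenerate case: if $\mathcal{A}=0$ the claim is trivial with $b=0$, so I assume $\mathcal{A}\neq 0$, which immediately forces every factor $a^k\neq 0$ (a single vanishing factor would kill the whole tensor).

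For the core step, fix $k\in\{2,\dots,d\}$ and apply super-symmetry under the transposition $\pi_{1,k}$ that exchanges positions $1$ and $k$. Written out entrywise, this reads
\[
a^1_{i_1}\,a^k_{i_k}\prod_{j\neq 1,k} a^j_{i_j} \;=\; a^1_{i_k}\,a^k_{i_1}\prod_{j\neq 1,k} a^j_{i_j}.
\]
Since each $a^j\neq 0$, I can fix the remaining indices $i_j$ (for $j\neq 1,k$) so that the common product $\prod_{j\neq 1,k} a^j_{i_j}$ is nonzero, and cancel it; this leaves $a^1_{i_1}a^k_{i_k}=a^1_{i_k}a^k_{i_1}$ for all $i_1,i_k$, which is exactly the statement that the rank-one matrix $a^1 (a^k)^\top$ is symmetric. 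Choosing an index $p$ with $a^1_p\neq 0$ and setting $i_1=p$ then yields $a^k = c_k\,a^1$ with $c_k = a^k_p/a^1_p$. Hence every factor is a scalar multiple of $a^1$; the asserted form $a^i=\pm a^1$ is this parallelism read under the normalization that pins down each factor up to a sign.

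With $a^k=c_k a^1$ for $k=2,\dots,d$ (and $c_1=1$), multilinearity of the outer product gives $\mathcal{A} = C\,\underbrace{a^1\otimes\cdots\otimes a^1}_{d}$ where $C=\prod_{k=2}^{d} c_k$. Here the complex domain enters decisively: choosing any $\gamma\in\C$ with $\gamma^{d}=C$ and setting $b=\gamma\,a^1$ produces $\underbrace{b\otimes\cdots\otimes b}_{d}=\gamma^{d}(\underbrace{a^1\otimes\cdots\otimes a^1}_{d})=\mathcal{A}$, the desired super-symmetric rank-one form. Over $\Br$ this final step can fail (for instance $C<0$ with $d$ even), which is precisely why the statement is phrased over $\C$ and why it extends the earlier real-valued result of \cite{Jiang-Ma-Zhang-TPCA-2013}.

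The genuinely substantive point, and the one I would treat most carefully, is the passage from the entrywise symmetry relation to the conclusion $a^k=c_k a^1$: it depends on isolating a single pair of indices by cancelling a nonzero common factor, which in turn relies on the non-vanishing of all factors established at the outset. Everything else — the collapse to $C\,(a^1)^{\otimes d}$ and the extraction of a complex $d$-th root — is routine once parallelism is in hand.
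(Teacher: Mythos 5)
Your proof is correct, but it takes a genuinely different route from the paper's. The paper's argument is analytic: it forms $\mathcal{T}=\bar{\mathcal{A}}\otimes\mathcal{A}$, observes that $\mathcal{T}$ is a conjugate partial symmetric tensor, and invokes Theorem 6.5 of the reference [Jiang2014a] (a Banach-type result stating that the maximum of $\mathcal{T}$ over a single unit vector equals its maximum over independent unit vectors, namely $\prod_i\|a^i\|^2$); the equality case of Cauchy--Schwarz then forces all the $a^i$ to be parallel. Your argument replaces this machinery with elementary entrywise algebra: super-symmetry under each transposition $\pi_{1,k}$ says the rank-one matrix $a^1(a^k)^\top$ is symmetric, and after cancelling a nonzero common factor (legitimate once $\mathcal{A}\neq 0$ forces every $a^j\neq 0$) this gives $a^k=c_k a^1$ directly. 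Both proofs then finish identically, absorbing the accumulated scalar $C$ into $b$ via a complex $d$-th root. What your approach buys is self-containedness and transparency --- no appeal to an external theorem on conjugate partial symmetric tensors, and an explicit display of where the complex field is needed (extraction of $\sqrt[d]{C}$); what the paper's approach buys is brevity given machinery its authors already developed elsewhere. One shared caveat: over $\C$, parallelism only yields $a^i=c_i a^1$ with $c_i\in\C$ (not $\pm 1$; e.g.\ $a^2=\ii a^1$ is possible even after norm normalization), so the literal claim ``$a^i=\pm a^1$'' is loose in both your write-up and the paper's own proof, which asserts $\lambda=\pm 1$; this imprecision is harmless, since the substantive conclusion $\mathcal{A}=b^{\otimes d}$, which is all that Theorem \ref{theorem:tensor-rank1} uses, is fully established by your argument.
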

\begin{proof} Since $\mathcal{A}$ is super-symmetric, construct $\mathcal{ T} = \bar{\mathcal{A}} \otimes \mathcal{A}$ and it is easy to show that $\forall\,(j_1\dots j_d) \in \Pi (i_1\dots i_d),\,(j_{d+1}\dots j_{2d})\in\Pi(i_{d+1}\dots i_{2d}),$
$$\mathcal{ T}_{i_1\dots i_d i_{d+1}\dots i_{2d}} = \mathcal{ T}_{j_1\dots j_d j_{d+1}\dots j_{2d}},$$
and $\forall 1\le i_1\le\dots\le i_{d}\le n,\, 1\le i_{d+1}\le\dots\le i_{2d}\le n,$
$$\mathcal{ T}_{i_1\dots i_d i_{d+1}\dots i_{2d}} = \overline{\mathcal{ T}_{i_{d+1}\dots i_{2d} i_1\dots i_d}}.$$
Therefore, $\mathcal T$ belongs to the so-called conjugate partial symmetric tensor introduced in~\cite{Jiang2014a}. Moreover, from Theorem 6.5 in~\cite{Jiang2014a}, we know that
\begin{align*}
& \max_{\|x\|=1 } \mathcal{ T}(\underbrace{\overline{x},\dots,\ov{x}}_d,\underbrace{x,\dots,x}_d) \\
= & \max_{\|x^i\|=1,\,i=1,\ldots,d} \mathcal{ T}(\overline{x^1},\dots,\ov{x^d},x^{1},\dots,x^{d}) \\
= & \|a^1\|^2\cdot \|a^2\|^2\cdot \cdots \cdot \|a^d\|^2.
\end{align*}
So there must exist an $\hat{x}$ with $\|\hat{x}\|=1$ such that $ |(a^i)^{\top}\hat{x}|=\|a^i\|$ for all $i$, {which implies that $a^i =c_i a^1$ for some $c_i \in \C, \forall \; i=2,\ldots,d$, and $\mathcal{A} =\lambda\, \underbrace{a^1\otimes a^1 \otimes  \cdots \otimes a^1}_{d}$ for some constant $\lambda = \Pi_{i=2}^{d} c_i$. Finally by taking $b = \sqrt[d]{\lambda}a^1$, the conclusion follows.}
\end{proof}

The rank-one equivalence is established in the following theorem.
\begin{theorem}\label{theorem:tensor-rank1}
Suppose $\mathcal{F} \in \CSI^{n^{2d}}$ and  we have
$$\rank_{M}(\mathcal {F}) = 1 \Longleftrightarrow \rank_{SCP}(\mathcal {F})=1.$$
\end{theorem}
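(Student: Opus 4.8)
The plan is to prove the two implications separately, the reverse one being immediate and the forward one carrying all the content. For $\rank_{SCP}(\mathcal{F})=1 \Rightarrow \rank_{M}(\mathcal{F})=1$, if $\rank_{SCP}(\mathcal{F})=1$ then $\mathcal{F} = \underbrace{a\otimes\cdots\otimes a}_{2d}$ for some $a\in\C^n$, and I would simply regroup this as $\mathcal{F} = (\underbrace{a\otimes\cdots\otimes a}_{d})\otimes(\underbrace{a\otimes\cdots\otimes a}_{d})$. Setting $\mathbf{a}=\vect(a^{\otimes d})$, this is an M-decomposition with a single term, so $\matr(\mathcal{F}) = \mathbf{a}\mathbf{a}^\top$ has rank one and $\rank_{M}(\mathcal{F})=1$ (the tensor being nonzero).

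For the forward direction $\rank_{M}(\mathcal{F})=1 \Rightarrow \rank_{SCP}(\mathcal{F})=1$, I would first reduce to a purely entrywise statement about a factor $\mathcal{A}$. Since $\rank_{M}(\mathcal{F})=\rank_{SM}(\mathcal{F})$ by Definition~\ref{def:symmetric-matrix-rank}, Theorem~\ref{theorem:symmetric-rank-equivalence} gives $\rank_{SSM}(\mathcal{F})=1$, so there exists a super-symmetric tensor $\mathcal{A}\in\CSI^{n^d}$ with $\mathcal{F}=\mathcal{A}\otimes\mathcal{A}$. It then suffices to show that such an $\mathcal{A}$ must be a symmetric rank-one tensor $\lambda\,a^{\otimes d}$, for then $\mathcal{F}=\lambda^2 a^{\otimes 2d}=b^{\otimes 2d}$ with $b=\lambda^{1/d}a$, which yields $\rank_{SCP}(\mathcal{F})=1$.

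The crux is to exploit the super-symmetry of $\mathcal{F}=\mathcal{A}\otimes\mathcal{A}$ across the two blocks of indices. Writing $\mathcal{F}_{i_1\cdots i_{2d}}=\mathcal{A}_{i_1\cdots i_d}\mathcal{A}_{i_{d+1}\cdots i_{2d}}$ and applying the transposition exchanging one index of the first block with one of the second, I obtain the quadratic relation $\mathcal{A}_{\alpha p}\,\mathcal{A}_{q\beta}=\mathcal{A}_{\alpha q}\,\mathcal{A}_{p\beta}$ for all index tuples $\alpha,\beta$ and all $p,q$. Using the super-symmetry of $\mathcal{A}$ to rewrite $\mathcal{A}_{q\beta}=\mathcal{A}_{\beta q}$, this says precisely that every $2\times 2$ minor of the matricization of $\mathcal{A}$ obtained by taking one index as the column and the remaining $d-1$ indices as the row vanishes; hence that matricization has rank one, i.e.\ $\mathcal{A}=\mathcal{G}\otimes a$ for some $a\in\C^n$ and $\mathcal{G}\in\C^{n^{d-1}}$. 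I would then peel off vectors one mode at a time: super-symmetry of $\mathcal{A}$ forces the identical proportionality along the next mode, so $\mathcal{G}=\mathcal{H}\otimes a$ with the \emph{same} $a$, and iterating $d$ times gives $\mathcal{A}=\lambda\,a^{\otimes d}$ (equivalently, once $\mathcal{A}$ is displayed as an outer product of $d$ vectors, Lemma~\ref{lemma:rank-one} returns the symmetric rank-one form directly).

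I expect the main obstacle to be making the ``vanishing minors $\Rightarrow$ rank one'' step and its iteration fully rigorous while tracking the index bookkeeping and handling degeneracies. Phrasing the relation as the vanishing of $2\times 2$ minors, rather than dividing by entries of $\mathcal{A}$, is exactly what lets me handle zero entries cleanly, and the case $a=0$ (equivalently $\mathcal{F}=0$) must be treated separately since then every rank is zero. The remaining care is to check that at each peeling step the residual tensor inherits enough symmetry for the argument to recurse, which follows from the super-symmetry of $\mathcal{A}$.
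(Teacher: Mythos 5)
Your proof is correct, and its main direction takes a genuinely different route from the paper's. Both proofs treat $\rank_{SCP}(\mathcal{F})=1\Rightarrow\rank_{M}(\mathcal{F})=1$ identically (regroup $a^{\otimes 2d}$ as a product of two copies of $a^{\otimes d}$), and both begin the converse the same way, using Theorem~\ref{theorem:symmetric-rank-equivalence} to write $\mathcal{F}=\mathcal{A}\otimes\mathcal{A}$ with $\mathcal{A}\in\CSI^{n^d}$. The divergence is in showing that such an $\mathcal{A}$ must be symmetric rank-one. The paper outsources this step: it cites Lemma 2.1 and Proposition 2.3 of \cite{Jiang-Ma-Zhang-TPCA-2013} to conclude that the Tucker rank of $\mathcal{A}$ is $(1,\ldots,1)$ and hence its (asymmetric) CP-rank is one, and then invokes Lemma~\ref{lemma:rank-one} --- whose own proof rests on Theorem 6.5 of \cite{Jiang2014a}, a nontrivial result on conjugate partial symmetric tensors --- to upgrade the asymmetric rank-one decomposition to a symmetric one. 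You replace all of that machinery with an elementary entrywise computation: super-symmetry of $\mathcal{A}\otimes\mathcal{A}$ under transpositions exchanging an index between the two blocks gives $\mathcal{A}_{\alpha p}\mathcal{A}_{\beta q}=\mathcal{A}_{\alpha q}\mathcal{A}_{\beta p}$, i.e.\ all $2\times 2$ minors of the one-mode unfolding of $\mathcal{A}$ vanish, so $\mathcal{A}=\mathcal{G}\otimes a$; then super-symmetry of $\mathcal{A}$ itself gives $\mathcal{G}_{\gamma r}a_p=\mathcal{G}_{\gamma p}a_r$, so the \emph{same} vector $a$ peels off at every mode and the recursion ends at $\mathcal{A}=\lambda\,a^{\otimes d}$. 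In particular you never need Lemma~\ref{lemma:rank-one}: symmetry of the final decomposition is automatic rather than recovered after the fact. The paper's route buys brevity and reuse of results that are of independent interest elsewhere in the paper; yours buys a self-contained, purely algebraic argument over $\C$ whose only nontrivial ingredient is the rank-one criterion for matrices, with degeneracies handled cleanly by the minor formulation. When writing it up, just state explicitly that $\rank_{M}(\mathcal{F})=1$ forces $\mathcal{F}\neq 0$, hence $\mathcal{A}\neq 0$ and $a\neq 0$, so the division by $a_{p_0}$ in the peeling step and the extraction of the root $b=\lambda^{1/d}a$ at the end are legitimate.
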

\begin{proof}
Suppose $\rank_{SCP}(\mathcal{F})=1$ and $\mathcal{F}=\underbrace{x\otimes \cdots \otimes x}_{2d}$ for some $x \in \C^n$. By constructing $\mathcal{A}=\underbrace{x\otimes \cdots \otimes x}_{d}$, we have $\mathcal{F}=\mathcal{A} \otimes \mathcal{A}$ with $\mathcal{A} \in \CSI^{n^{d}}$. Thus, $\rank_{M}(\mathcal {F})=1$.

To prove the other direction, suppose that we have $\mathcal{F} \in \SI^{n^{2d}}$ and its M-rank is one, i.e.\ $\mathcal{F}=\mathcal{A} \otimes \mathcal{A}$ for some $\mathcal{A} \in \CSI^{n^{d}}$. By similar arguments as in Lemma 2.1 and Proposition 2.3 in~\cite{Jiang-Ma-Zhang-TPCA-2013}, one has that the Tucker rank of $\mathcal{A}$ is $(1,1,\cdots,1)$ and consequently the asymmetric CP-rank of $\mathcal{A}$ is one. This fact together with Lemma~\ref{lemma:rank-one} implies that the symmetric CP-rank of $\mathcal{A}$ is one as well, i.e., $\mathcal{A} = \underbrace{b \otimes  \cdots \otimes b}_{d}$ for some $b \in \C^{n}$. It follows from $\mathcal{F}=\mathcal{A} \otimes \mathcal{A} = \underbrace{b\otimes \cdots \otimes b}_{2d}$ that $\rank_{SCP}(\mathcal {F})=1$.
\end{proof}

\subsection{Bounds for Asymmetric Fourth-Order Tensors}

For an asymmetric fourth-order tensor, the relation between its CP-rank and the corresponding M-rank is summarized in the following result.

\begin{theorem}\label{theorem:asymmetric-bound} Suppose $\mathcal {F} \in  \C^{n_{1}\times n_{2}\times n_{3}\times n_{4}}$ with $n_1 \le n_2 \le n_3 \le n_4$. Then for any permutation $\pi$ of $(1, 2, 3, 4)$ it holds that
\begin{equation}\label{asymmetric-bound1}
\rank( \matr(\mathcal {F}_{\pi})) \le \rank_{CP} (\mathcal {F}_{\pi}) \le n_1n_3\cdot \rank( \matr(\mathcal {F}_{\pi})).
\end{equation}
Moreover, the inequalities above can be sharpened to
\begin{equation}\label{asymmetric-bound2}
\rank_{M^+} (\mathcal {F}) \le \rank_{CP} (\mathcal {F}) \le n_1n_3\cdot\rank_{M^-} (\mathcal {F}).
\end{equation}
\end{theorem}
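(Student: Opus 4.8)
The plan is to first prove the per-permutation sandwich \eqref{asymmetric-bound1} for a fixed $\pi$, and then deduce \eqref{asymmetric-bound2} from it by exploiting that the CP-rank is invariant under index permutations, i.e. $\rank_{CP}(\mathcal{F}_\pi)=\rank_{CP}(\mathcal{F})$ for every $\pi$ (a rank-one CP-decomposition of $\mathcal{F}$ is carried to one of $\mathcal{F}_\pi$ by permuting the factor order within each summand, preserving both rank-one-ness and the number of terms), and then taking the maximum, respectively the minimum, over $\pi$.

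For the lower bound in \eqref{asymmetric-bound1}, I would start from a minimal CP-decomposition $\mathcal{F}_\pi=\sum_{i=1}^{r}a^{1,i}\otimes a^{2,i}\otimes a^{3,i}\otimes a^{4,i}$ with $r=\rank_{CP}(\mathcal{F}_\pi)$ and apply the square unfolding. As explained right after Definition~\ref{def:matrix-unfolding}, $\matr$ turns the outer product of two lower-order tensors into a rank-one matrix; here each rank-one summand maps to $\vect(a^{1,i}\otimes a^{2,i})\,\vect(a^{3,i}\otimes a^{4,i})^\top$, which has matrix rank one. Hence $\matr(\mathcal{F}_\pi)$ is a sum of $r$ rank-one matrices, so $\rank(\matr(\mathcal{F}_\pi))\le r=\rank_{CP}(\mathcal{F}_\pi)$.

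For the upper bound in \eqref{asymmetric-bound1}, I would reverse direction. Setting $r=\rank(\matr(\mathcal{F}_\pi))$, the matrix rank-one decomposition of $\matr(\mathcal{F}_\pi)$ furnishes an M-decomposition $\mathcal{F}_\pi=\sum_{i=1}^{r}\mathcal{A}^i\otimes\mathcal{B}^i$ in which $\mathcal{A}^i$ is an $n_{\pi(1)}\times n_{\pi(2)}$ matrix and $\mathcal{B}^i$ an $n_{\pi(3)}\times n_{\pi(4)}$ matrix. Each matrix factor has an ordinary rank-one decomposition into $\rank(\mathcal{A}^i)\le\min\{n_{\pi(1)},n_{\pi(2)}\}$, respectively $\rank(\mathcal{B}^i)\le\min\{n_{\pi(3)},n_{\pi(4)}\}$, terms; distributing the outer product over these sums writes $\mathcal{A}^i\otimes\mathcal{B}^i$ as a sum of $\rank(\mathcal{A}^i)\cdot\rank(\mathcal{B}^i)$ genuine rank-one fourth-order tensors, whence
$$\rank_{CP}(\mathcal{F}_\pi)\le\sum_{i=1}^{r}\rank(\mathcal{A}^i)\cdot\rank(\mathcal{B}^i)\le r\cdot\min\{n_{\pi(1)},n_{\pi(2)}\}\cdot\min\{n_{\pi(3)},n_{\pi(4)}\}.$$
It then remains to verify the elementary inequality $\min\{n_{\pi(1)},n_{\pi(2)}\}\cdot\min\{n_{\pi(3)},n_{\pi(4)}\}\le n_1n_3$. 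A permutation only determines how $\{1,2,3,4\}$ splits into the two unordered pairs $\{\pi(1),\pi(2)\}$ and $\{\pi(3),\pi(4)\}$, so there are exactly three cases; using $n_1\le n_2\le n_3\le n_4$, one checks the product of the two minima is largest (equal to $n_1n_3$) for the split $\{1,2\},\{3,4\}$ and equals $n_1n_2\le n_1n_3$ otherwise. This gives $\rank_{CP}(\mathcal{F}_\pi)\le n_1n_3\cdot\rank(\matr(\mathcal{F}_\pi))$.

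Finally I would obtain \eqref{asymmetric-bound2} by combining \eqref{asymmetric-bound1} with $\rank_{CP}(\mathcal{F}_\pi)=\rank_{CP}(\mathcal{F})$: maximizing the lower bound over $\pi$ yields $\rank_{M^+}(\mathcal{F})=\max_\pi\rank(\matr(\mathcal{F}_\pi))\le\rank_{CP}(\mathcal{F})$, and minimizing the upper bound over $\pi$ yields $\rank_{CP}(\mathcal{F})\le n_1n_3\cdot\min_\pi\rank(\matr(\mathcal{F}_\pi))=n_1n_3\cdot\rank_{M^-}(\mathcal{F})$. I expect no serious obstacle: the only place demanding care is the bookkeeping of the vectorization and reshaping conventions, so that square unfolding provably sends rank-one tensors to rank-one matrices and, conversely, each matrix factor of an M-decomposition reshapes back to the correct dimensions. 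The conceptual crux is the upper-bound counting argument, sharpened by the observation that $\min\cdot\min\le n_1n_3$ across all pairings.
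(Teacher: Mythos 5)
Your proposal is correct and follows essentially the same route as the paper's proof: grouping the CP factors pairwise to get the lower bound, splitting each matrix factor of the M-decomposition into at most $\min\{n_{j_1},n_{j_2}\}\cdot\min\{n_{j_3},n_{j_4}\}\le n_1n_3$ rank-one terms for the upper bound, and then optimizing over permutations. The only difference is that you spell out two steps the paper leaves implicit (the permutation-invariance of $\rank_{CP}$ and the three-way case check of the pairings), which is fine.
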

\begin{proof} Suppose $\rank_{CP} (\mathcal {F}) = r$. Let the rank-one decomposition be
$\mathcal{F} = \sum_{i=1}^{r} a^{1,i}\otimes a^{2,i}\otimes a^{3,i} \otimes a^{4,i}$, with $a^{k,i} \in \C^{n_i}$ for $k=1,\ldots, 4$ and $i=1,\dots, r.$
By letting $A^i = a^{1,i}\otimes a^{2,i}$ and $B^i = a^{3,i} \otimes a^{4,i}$, we get $\mathcal{F} = \sum_{i=1}^{r} A^i \otimes B^i$. Thus $\rank_M (\mathcal {F}) \le r = \rank_{CP} (\mathcal {F}) $. 
In fact, this holds for $\mathcal {F}_{\pi}$ where $\pi$ is any permutation of $(1, 2, 3, 4)$.

On the other hand, for any $\mathcal {F}_{\pi}$  denote $r_M = \rank(\matr(\mathcal {F}_{\pi}))$ and $(j_1,j_2,j_3,j_4)=\pi(1,2,3,4)$. Then
$\mathcal{F}_{\pi} = \sum_{i=1}^{r_M} A^i \otimes B^i$ with matrices $A^i \in \C^{n_{j_1} \times n_{j_2}}$, $B^i \in \C^{n_{j_3} \times n_{j_4}}$ for $i=1,\ldots,r_M,$
and it follows that $\rank (A^i) \le \ell_1 $ and $\rank (B^i) \le \ell_2$ for all $i=1,\ldots, r_M$, where $\ell_1 := \min\{n_{j_1},n_{j_2}\}$ and $\ell_2 := \min\{n_{j_3},n_{j_4}\}$. In other words, matrices $A^i$ and $B^i$ admit some rank-one decompositions with at most $\ell_1$ and $\ell_2$ terms, respectively. Consequently, $\mathcal{F}$ can be decomposed as the sum of at most $r_M\ell_1\ell_2$ rank-one tensors, or equivalently
\begin{align*}
\rank_{CP} (\mathcal {F}_{\pi}) & \le  \min\{n_{j_1},n_{j_2}\}\cdot \min\{n_{j_3},n_{j_4}\}\cdot \rank_M (\mathcal {F}_{\pi}) \\ & \le n_1n_3\cdot\rank_M (\mathcal {F}_{\pi}).
\end{align*}
Since the bounds~\eqref{asymmetric-bound1} hold for all $\mathcal {F}_{\pi}$ and
$\rank_{M^-} (\mathcal {F})= \min_\pi{\rank( \matr(\mathcal {F}_{\pi}))}$, $\rank_{M^+} (\mathcal {F})= \max_\pi{\rank( \matr(\mathcal {F}_{\pi}))},$
the sharper bounds \eqref{asymmetric-bound2} follow immediately.
\end{proof}
The following results further show that the bounds in \eqref{asymmetric-bound2} are actually tight.
\begin{proposition}\label{tight-example}
Let us consider a fourth order tensor $\mathcal{F} \in \C^{n_1 \times n_2 \times n_3 \times n_4} $ such that
\begin{equation}\label{special-tensor}
\mathcal{F} = A \otimes B \;\mbox{for some matrices}\;A \in \C^{n_1 \times n_2}\; \mbox{and} \;B \in \C^{n_3 \times n_4}.
\end{equation}
Denote $r_1= \rank(A)$, $r_2= \rank(B)$. Then, the following holds: 
\begin{itemize}
\item[(i)] The Tucker rank of $\mathcal{F}$ is $(r_1, r_1, r_2, r_2)$;
\item[(ii)] 
$\rank_{M^+}(\mathcal{F}) = r_1\, r_2$ and $\rank_{M^-}(\mathcal{F}) = 1$;
\item[(iii)] 
$\rank_{CP} (\mathcal{F}) = r_1\, r_2$.
\end{itemize}
\end{proposition}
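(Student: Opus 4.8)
The plan is to prove the three items by tying every rank appearing in the statement directly to $r_1=\rank(A)$ and $r_2=\rank(B)$, and then to close the loop using the already-proved bound of Theorem~\ref{theorem:asymmetric-bound}. For (i) I would read each matricization off the entrywise identity $\mathcal{F}_{i_1i_2i_3i_4}=A_{i_1i_2}B_{i_3i_4}$. The mode-$1$ unfolding factors through $A$: each fiber obtained by freezing the mode-$1$ index is a copy of a row (or column) of $A$ multiplied by the fixed tensor $B$, so its column space is exactly that of $A$ and $\rank(F(1))=r_1$; the identical argument for the second index of $A$ gives $\rank(F(2))=r_1$, and the same reasoning applied to $B$ gives $\rank(F(3))=\rank(F(4))=r_2$. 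Hence the Tucker rank is $(r_1,r_1,r_2,r_2)$. This step is routine bookkeeping.

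For (ii) and (iii) I would argue by sandwiching. First, $\rank_{M^-}(\mathcal{F})=1$ is immediate: under the identity permutation the square unfolding groups indices $\{1,2\}$ against $\{3,4\}$, so $\matr(\mathcal{F})=\vct(A)\,\vct(B)^{\top}$ is a rank-one matrix, and since the $M^-$-rank is a minimum over permutations it cannot drop below $1$ for a nonzero tensor. Next I would establish the upper bound $\rank_{CP}(\mathcal{F})\le r_1 r_2$ constructively: substituting rank-one decompositions $A=\sum_{s=1}^{r_1}u_s v_s^{\top}$ and $B=\sum_{t=1}^{r_2}p_t q_t^{\top}$ into $\mathcal{F}=A\otimes B$ and expanding exhibits $\mathcal{F}=\sum_{s,t}u_s\otimes v_s\otimes p_t\otimes q_t$, a sum of $r_1 r_2$ rank-one fourth-order tensors.

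For the matching lower bound I would single out the permutation grouping indices $\{1,3\}$ against $\{2,4\}$. The corresponding square unfolding has $\big((i_1,i_3),(i_2,i_4)\big)$-entry equal to $A_{i_1i_2}B_{i_3i_4}$, which, up to a permutation of its rows and columns, is precisely the Kronecker product of $A$ and $B$. Since the rank of a Kronecker product equals the product of the ranks, this unfolding has rank $r_1 r_2$, and therefore $\rank_{M^+}(\mathcal{F})\ge r_1 r_2$. Combining this with Theorem~\ref{theorem:asymmetric-bound}, which supplies $\rank_{M^+}(\mathcal{F})\le\rank_{CP}(\mathcal{F})$, and with the constructive upper bound, I obtain the chain $r_1 r_2\le\rank_{M^+}(\mathcal{F})\le\rank_{CP}(\mathcal{F})\le r_1 r_2$, forcing equality throughout. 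This simultaneously gives $\rank_{M^+}(\mathcal{F})=r_1 r_2$ in (ii) and $\rank_{CP}(\mathcal{F})=r_1 r_2$ in (iii).

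The only point that requires genuine care rather than bookkeeping is the identification of the $\{1,3\}$-versus-$\{2,4\}$ unfolding with a Kronecker product together with the invocation of rank multiplicativity: one must verify that the square-unfolding index convention of Definition~\ref{def:matrix-unfolding} reorders entries only \emph{within} the two merged blocks, so that relative to the standard Kronecker layout it changes the matrix by nothing more than a permutation of rows and columns, which leaves the rank unaffected. Everything else is either a direct computation from the entrywise formula or an appeal to the already-established inequality $\rank_{M^+}\le\rank_{CP}$.
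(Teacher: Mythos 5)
Your proof is correct and establishes all three claims, but it takes a partly different route from the paper's, and the difference is worth recording. The shared skeleton: both arguments obtain $\rank_{CP}(\mathcal{F})\le r_1r_2$ from the explicit decomposition $\mathcal{F}=\sum_{s,t}u_s\otimes v_s\otimes p_t\otimes q_t$, and both use the first inequality in \eqref{asymmetric-bound1} (valid for every square unfolding) to turn a large unfolding rank into a lower bound on $\rank_{CP}$. The divergence is in how the unfolding ranks are evaluated. The paper expands $A$ and $B$ in singular value decompositions and writes explicit orthogonal rank-one decompositions of each matricization --- $F(1)=\sum_i a^i\otimes\vct(b^i\otimes B)$ for part (i), and $F(1,3)=\sum_{i,j}\vct(a^i\otimes c^j)\otimes\vct(b^i\otimes d^j)$ for part (ii) --- thereby computing the ranks of all three square unfoldings $F(1,2)$, $F(1,3)$, $F(1,4)$ exactly ($1$, $r_1r_2$, $r_1r_2$) and reading off $\rank_{M^+}$ and $\rank_{M^-}$ as the max and min. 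You instead identify the $\{1,3\}$-versus-$\{2,4\}$ unfolding with the Kronecker product of $A$ and $B$ up to row and column permutations and invoke rank multiplicativity of the Kronecker product; the caveat you flag --- that the convention of Definition~\ref{def:matrix-unfolding} reorders entries only within the two merged blocks --- is exactly the right thing to check. Then, instead of computing the $\{1,4\}$ unfolding at all, you close (ii) and (iii) simultaneously via the sandwich $r_1r_2\le\rank_{M^+}(\mathcal{F})\le\rank_{CP}(\mathcal{F})\le r_1r_2$. Your version is more economical (one unfolding rather than three, no orthogonality bookkeeping, and a part-(i) column-space argument that is more elementary than the paper's SVD-based one), and it is also cleaner over $\C$: the paper's orthogonality computations pair vectors with the bilinear form $x^\top y$ rather than the Hermitian inner product, which for complex data requires a routine repair, whereas Kronecker rank multiplicativity holds over any field. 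The price is that you lean on Theorem~\ref{theorem:asymmetric-bound} for the upper bound on $\rank_{M^+}$ as well as for the lower bound on $\rank_{CP}$; since that theorem is established before this proposition, no circularity arises.
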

\begin{proof}
Suppose the singular value decompositions of $A$ and $B$ are given by
\begin{equation}\label{svd-matrix}
A = \sum_{i=1}^{r_1} a^i \otimes b^i\; \mbox{and}\; B = \sum_{j=1}^{r_2} c^j \otimes d^j.
\end{equation}
Recall that $F(1)$ denotes the mode-$1$ unfolding of $\mathcal{F}$. According to~\eqref{svd-matrix}, it is easy to verify that
\begin{equation}\label{mode-1-ortho-decomp}F(1) = \sum_{i=1}^{r_1} a^i \otimes \vect(b^i \otimes B).\end{equation}
Moreover we observe that for $i \neq j$, $(\vect(b^i \otimes B))^{\top}(\vect(b^j \otimes B)) = (b^i)^{\top}b^j\cdot \tr(B^{\top}B) = 0$. Thus, \eqref{mode-1-ortho-decomp} is indeed an orthogonal
decomposition of $F(1)$ and thus, $\rank(F(1)) = r_1$.
Similarly we can show that $\rank(F(2)) = r_1$, $\rank(F(3)) = r_2$ and $\rank(F(4)) = r_2$. This proves part (i).

Now we consider the square unfoldings of $\mathcal{F}$. Let $F(1,2)$, $F(1,3)$, $F(1,4)$ be the square unfolded matrices by grouping indices $(1,2)$, $(1,3)$, $(1,4)$ respectively.
Due to \eqref{special-tensor}, we immediately have that $\rank(F(1,2))=1$ and also
\begin{equation}\label{square-unfolding-ortho-decomp}
F(1,3) = \sum_{i=1}^{r_1}\sum_{j=1}^{r_2}\vect(a^i \otimes c^j)\otimes \vect(b^i \otimes d^j).
\end{equation}
From the orthogonality of $a^i$'s, $b^i$'s, $c^j$'s, and $d^j$'s, it follows that $\{\vect(a^i \otimes c^j)\}_{i,j}$ and $\{\vect(b^i \otimes d^j)\}_{i,j}$ are two orthogonal bases. In other words, \eqref{square-unfolding-ortho-decomp} is an orthogonal decomposition of $F(1,3)$ and thus $\rank(F(1,3)) = r_1\,r_2$. In the same vein we can show that $\rank(F(1,4)) = r_1\,r_2$ as well. Since $F(1,2)$, $F(1,3)$ and $F(1,4)$ form all the square unfoldings of $\mathcal{F}$, we can conclude that $\rank_{M^+}(\F) = \rank(F(1,3))  = \rank(F(1,4)) =  r_1\,r_2$ and $\rank_{M^-}(\F) = 1$. This proves part (ii).

Finally, since \eqref{special-tensor} also implies
$$
\mathcal{F} = \sum_{i=1}^{r_1}\sum_{j=1}^{r_2}a^i \otimes b^i \otimes c^j \otimes d^j,
$$
it then follows that $\rank_{CP}(\mathcal{F}) \le r_1\,r_2$. On the other hand, note that the first inequality in~\eqref{asymmetric-bound1} holds for any square unfoldings.
Combining this with $\rank(F(1,3)) = r_1\,r_2$, one concludes that $r_1\,r_2$ is a lower bound of $ \rank_{CP}\mathcal{F}$ as well, hence
$\rank_{CP}(\mathcal{F}) = r_1\,r_2$.
\end{proof}
\begin{remark}
Now we remark that the bounds in~\eqref{asymmetric-bound2} are tight. Suppose tensor $\mathcal{F}$ is given in the form of \eqref{special-tensor} with $n_1 \le n_2 \le n_3 \le n_4$, and $\rank(A)=n_1$, $\rank(B)=n_3$. According to the above results, we have $\rank_{M^-}(\mathcal{F}) = 1$ and $\rank_{M^+}(\mathcal{F}) = \rank_{CP}(\mathcal{F}) = n_1 n_3$, which imply that the both lower bound and upper bound in~\eqref{asymmetric-bound2} are essentially tight. Moreover, {in this example the Tucker rank is exactly $(n_1,n_1,n_3,n_3)$, which in turn shows that $\rank_{M^+}$ is a superior approximation of $\rank_{CP}$ in this particular case. Although we are unable to extend such claim to the more general setting, we present a few examples in the numerical part for which $\rank_{CP}$ is strictly larger than any component of Tucker rank but is essentially identical to $\rank_{M^+}$.} In addition, it is easy to show that by similar argument the bounds in \eqref{asymmetric-bound1} also hold for the ranks defined for real-valued decompositions, i.e., $\rank^{\Br}( \matr(\mathcal {F}_{\pi})) \le \rank^{\Br}_{CP} (\mathcal {F}_{\pi}) \le n_1n_3\cdot \rank^{\Br}( \matr(\mathcal {F}_{\pi}))$. Moreover, for matrix $\matr(\mathcal {F}_{\pi})$ we have $\rank( \matr(\mathcal {F}_{\pi}))  = \rank^{\Br}( \matr(\mathcal {F}_{\pi})) $, thus establishing the following bounds: 
\begin{align*}
\rank_{CP} (\mathcal {F}_{\pi}) & \le \rank^{\Br}_{CP} (\mathcal {F}_{\pi}) \le n_1n_3\cdot \rank^{\Br}( \matr(\mathcal {F}_{\pi})) \\ & = n_1n_3\cdot\rank( \matr(\mathcal {F}_{\pi}))\le n_1n_3\cdot\rank_{CP} (\mathcal {F}_{\pi}).
\end{align*}
\end{remark}
Proposition~\ref{tight-example} can be further extended to exactly compute the CP-rank for a class of tensors.
\begin{corollary}
Consider an even order tensor $\mathcal{F} \in \C^{n_1 \times n_2 \times \cdots \times n_{2d}} $ such that
\begin{equation*}
\mathcal{F} = A^1 \otimes A^2 \otimes \cdots \otimes A^d \;\mbox{for some matrices}\;A^i \in \C^{n_{2i-1} \times n_{2i}}.
\end{equation*}
Denoting $r_i= \rank(A^i)$ for $i=1,\ldots, d$, we have that $\rank_{CP}( \mathcal{F}  ) = \rank_{M^+}( \mathcal{F}  ) = r_1r_2 \cdots r_d$.
\end{corollary}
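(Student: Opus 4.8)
The plan is to sandwich the target quantity $r_1 r_2\cdots r_d$ between an upper bound on $\rank_{CP}(\mathcal{F})$ and a lower bound on $\rank_{M^+}(\mathcal{F})$, and then close the loop with the generic inequality $\rank_{M^+}(\mathcal{F})\le\rank_{CP}(\mathcal{F})$, which extends the left inequality of \eqref{asymmetric-bound1} from fourth order to arbitrary even order. This is exactly the $d$-fold generalization of the three-part argument behind Proposition~\ref{tight-example}, with the role of the unfolding $F(1,3)$ now played by a single carefully chosen square unfolding.

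First I would produce an explicit rank-one decomposition to bound $\rank_{CP}$ from above. Writing the singular value decomposition of each factor as $A^k=\sum_{j=1}^{r_k}a^{k,j}\otimes b^{k,j}$, where for each fixed $k$ the families $\{a^{k,j}\}_j$ and $\{b^{k,j}\}_j$ are orthogonal, and distributing the outer product over these sums yields
$$\mathcal{F}=\sum_{j_1=1}^{r_1}\cdots\sum_{j_d=1}^{r_d} a^{1,j_1}\otimes b^{1,j_1}\otimes\cdots\otimes a^{d,j_d}\otimes b^{d,j_d},$$
which is a sum of $r_1 r_2\cdots r_d$ rank-one $2d$-th order tensors. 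Hence $\rank_{CP}(\mathcal{F})\le r_1 r_2\cdots r_d$.

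Next I would exhibit one square unfolding whose rank equals $r_1\cdots r_d$, thereby lower bounding $\rank_{M^+}$. Let $\pi$ be the permutation placing the odd positions $1,3,\ldots,2d-1$ first and the even positions $2,4,\ldots,2d$ last, so that in $\matr(\mathcal{F}_\pi)$ the row multi-index collects one index from each matrix $A^k$ while the column multi-index collects the complementary ones. Substituting the SVDs above and mirroring the computation in \eqref{square-unfolding-ortho-decomp} gives
$$\matr(\mathcal{F}_\pi)=\sum_{j_1,\ldots,j_d}\vect(a^{1,j_1}\otimes\cdots\otimes a^{d,j_d})\, \vect(b^{1,j_1}\otimes\cdots\otimes b^{d,j_d})^{\top}.$$
Since an outer product of vectors drawn from orthogonal families is again orthogonal, the two families $\{\vect(a^{1,j_1}\otimes\cdots\otimes a^{d,j_d})\}$ and $\{\vect(b^{1,j_1}\otimes\cdots\otimes b^{d,j_d})\}$ are orthogonal sets of $r_1\cdots r_d$ nonzero vectors, so the display is an orthogonal decomposition and $\rank(\matr(\mathcal{F}_\pi))=r_1\cdots r_d$. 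Consequently $\rank_{M^+}(\mathcal{F})\ge r_1\cdots r_d$.

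Finally I would combine the bounds. Any CP decomposition of $\mathcal{F}_\pi$ becomes an M-decomposition with the same number of terms by grouping its first $d$ and last $d$ vector factors into two tensors, so $\rank(\matr(\mathcal{F}_\pi))\le\rank_{CP}(\mathcal{F}_\pi)=\rank_{CP}(\mathcal{F})$ for every $\pi$; maximizing over $\pi$ gives $\rank_{M^+}(\mathcal{F})\le\rank_{CP}(\mathcal{F})$. Chaining everything yields $r_1\cdots r_d\le\rank_{M^+}(\mathcal{F})\le\rank_{CP}(\mathcal{F})\le r_1\cdots r_d$, forcing equality throughout. The only genuinely delicate step is the lower bound: one must track the multi-index bookkeeping of Definition~\ref{def:matrix-unfolding} carefully enough to confirm that the chosen $\pi$ really separates one index of each $A^k$ into the rows, and that the $r_1\cdots r_d$ vectorized outer products form orthogonal families. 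Everything else is routine once Proposition~\ref{tight-example} is in hand.
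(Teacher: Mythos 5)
Your proposal is correct and takes essentially the route the paper intends: the paper states this corollary without a separate proof, presenting it as a direct extension of Proposition~\ref{tight-example}, and your three steps (the SVD-based upper bound $\rank_{CP}(\mathcal{F})\le r_1\cdots r_d$, the orthogonal-decomposition argument showing the odd/even square unfolding has rank exactly $r_1\cdots r_d$, and the generic inequality $\rank_{M^+}(\mathcal{F})\le\rank_{CP}(\mathcal{F})$ obtained by grouping a CP decomposition into an M-decomposition) are precisely the $d$-fold generalization of parts (ii) and (iii) of that proposition's proof. No gaps to report.
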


\subsection{Bounds for Super-Symmetric Fourth-Order Tensors}

Theorem~\ref{theorem:tensor-rank1} essentially states that the M-rank and the symmetric CP-rank are the same in the rank-one case.  This equivalence, however, does not hold in general. 
In this subsection, we show that although they are not equivalent, the symmetric CP-rank of $\F \in\CSI^{n^{2d}}$ can be both lower and upper bounded (up to a constant factor) by the corresponding M-rank.

\begin{theorem}\label{rank-upper-bound} For any given $\mathcal{F}\in \CSI^{n^4}$, it holds that
$$
\rank_{M}(\mathcal {F}) \le \rank_{SCP}(\mathcal {F}) \le n^2\, \rank_{M}(\mathcal {F}).
$$
\end{theorem}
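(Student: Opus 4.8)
The plan is to establish the two inequalities separately; the left one is immediate, while the right one consumes all three preparatory lemmas of this subsection.

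\emph{Lower bound.} For $\rank_M(\mathcal{F}) \le \rank_{SCP}(\mathcal{F})$ I would take an optimal symmetric CP-decomposition $\mathcal{F} = \sum_{i=1}^r a^i \otimes a^i \otimes a^i \otimes a^i$ with $r = \rank_{SCP}(\mathcal{F})$ and regroup each quartic as $(a^i \otimes a^i) \otimes (a^i \otimes a^i)$. Setting $\mathcal{B}^i := a^i \otimes a^i \in \C^{n^2}$ exhibits a symmetric M-decomposition $\mathcal{F} = \sum_{i=1}^r \mathcal{B}^i \otimes \mathcal{B}^i$, whence $\rank_{SM}(\mathcal{F}) \le r$; since $\rank_M(\mathcal{F}) = \rank_{SM}(\mathcal{F})$ by Theorem~\ref{theorem:symmetric-rank-equivalence}, the claim follows.

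\emph{Upper bound.} The first move is to run Theorem~\ref{theorem:symmetric-rank-equivalence} in the opposite direction: writing $r := \rank_M(\mathcal{F}) = \rank_{SSM}(\mathcal{F})$, I obtain a strongly symmetric M-decomposition $\mathcal{F} = \sum_{i=1}^r \mathcal{A}^i \otimes \mathcal{A}^i$ in which each $\mathcal{A}^i \in \CSI^{n^2}$ is super-symmetric, i.e.\ a complex symmetric $n\times n$ matrix. Because rank and symmetric rank coincide for symmetric matrices (e.g.\ through the Takagi factorization), each $\mathcal{A}^i$ decomposes as $\mathcal{A}^i = \sum_{j_i=1}^{m_i} a^{j_i} \otimes a^{j_i}$ with $m_i = \rank(\mathcal{A}^i) \le n$. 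This is exactly the hypothesis of Lemma~\ref{lemma:alt-representation}, which I would apply to split $\mathcal{F}$ into a diagonal part $\sum_i \sum_{j_i} a^{j_i} \otimes a^{j_i} \otimes a^{j_i} \otimes a^{j_i}$---already a sum of at most $\sum_i m_i \le rn$ symmetric rank-one tensors---plus a cross part built from the symmetrized mixed terms $\sym(a^{j_i} \otimes a^{j_i} \otimes a^{k_i} \otimes a^{k_i})$ over $j_i \neq k_i$.

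The real work is to certify that the cross part has symmetric CP-rank $O(n^2)$ per index $i$. By Lemma~\ref{lemma:expectation} the cross sum for a fixed $i$ equals $\ex[(\sum_{j_i}\xi_{j_i} a^{j_i})^{\otimes 4}] - \sum_{j_i}(a^{j_i})^{\otimes 4}$, so up to diagonal terms it is an average of the symmetric rank-one tensors $(\sum_{j_i}\xi_{j_i} a^{j_i})^{\otimes 4}$. The naive count of such terms is $2^{m_i}$, which is worthless; the decisive step is to reproduce the same fourth-moment tensor using only a quadratic number of sign patterns. I would achieve this by polarization---equivalently, by keeping only the two-sparse sign patterns supported on each pair $\{j_i,k_i\}$---so that each $\sym(a^{j_i} \otimes a^{j_i} \otimes a^{k_i} \otimes a^{k_i})$ is written through the quartics $(a^{j_i} \pm a^{k_i})^{\otimes 4}$, $(a^{j_i})^{\otimes 4}$ and $(a^{k_i})^{\otimes 4}$. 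As there are at most $n^2$ pairs for each $i$, the cross part costs at most $4n^2$ symmetric rank-one tensors per $i$. Adding the diagonal cost $n$ and summing over $i=1,\dots,r$ gives $\rank_{SCP}(\mathcal{F}) \le (n+4n^2)\,r = (n+4n^2)\,\rank_M(\mathcal{F})$. The main obstacle is exactly this derandomization: converting the exponentially large expectation of Lemma~\ref{lemma:expectation} into explicitly, quadratically many symmetric rank-one terms while tracking all coefficients so that the spurious diagonal contributions cancel correctly.
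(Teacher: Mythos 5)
Your proposal is correct, and its overall skeleton (the regrouping argument for the lower bound; the reduction via Theorem~\ref{theorem:symmetric-rank-equivalence} to a strongly symmetric M-decomposition $\mathcal{F}=\sum_{i=1}^r\mathcal{A}^i\otimes\mathcal{A}^i$ with symmetric matrices $\mathcal{A}^i$; the Takagi-type decomposition $\mathcal{A}^i=\sum_{j=1}^{m_i}a^{j_i}\otimes a^{j_i}$, $m_i\le n$; and the splitting of Lemma~\ref{lemma:alt-representation}) coincides with the paper's proof. Where you genuinely diverge is in the decisive counting step. The paper stays probabilistic: it uses Lemma~\ref{lemma:expectation} to write the cross part as $\frac13\sum_i\ex\bigl[\bigl(\sum_j\xi_{i_j}a^{i_j}\bigr)^{\otimes4}\bigr]$, replaces the i.i.d.\ signs $\xi_{i_j}$ by $4$-wise independent signs $\eta_{i_j}$ (legitimate since only fourth moments enter, by \eqref{formula:expectation}), and invokes Proposition 6.5 of Alon--Babai--Itai to realize such signs on a sample space of cardinality at most $4m_i^2$, so each expectation collapses to a finite convex combination of at most $4m_i^2$ symmetric rank-one quartics. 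You instead eliminate probability altogether with an explicit polarization identity: for each unordered pair $\{j,k\}$ the six mixed patterns sum to $\frac12\bigl[(a^j+a^k)^{\otimes4}+(a^j-a^k)^{\otimes4}\bigr]-a^{j}{}^{\otimes4}-a^{k}{}^{\otimes4}$, and the negative coefficients this creates are harmless over $\C$ because any scalar can be absorbed as a fourth root. Both arguments are sound and deliver the stated bound; in fact your route, counted exactly (the diagonal quartics produced by the corrections merge with those already present, giving $m_i+m_i(m_i-1)=m_i^2$ terms per $i$), yields the slightly sharper estimate $\rank_{SCP}(\F)\le\sum_i m_i^2\le n^2\,\rank_M(\F)$, and it is entirely self-contained, requiring no external small-sample-space construction. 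What the paper's probabilistic route buys is generality: for $2d$-th order tensors the identical argument runs with $2d$-wise independent signs (this is precisely the Hilbert's identity / $k$-wise uncorrelated machinery of the cited references), whereas explicit polarization identities grow increasingly unwieldy with the order. One cosmetic remark: your aside that the cross sum equals $\ex\bigl[\bigl(\sum_j\xi_{j}a^{j}\bigr)^{\otimes4}\bigr]-\sum_j a^{j}{}^{\otimes4}$ holds for the unweighted sum over ordered pairs, while the cross part of Lemma~\ref{lemma:alt-representation} carries an extra factor $\frac13$; this does not affect your argument, which never actually uses that identity.
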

\begin{proof}
Let us first prove $\rank_{M}(\mathcal {F}) \le \rank_{SCP}(\mathcal {F})$.
Suppose $\rank_{SCP}(\mathcal {F}) = r$, i.e.,
$$\mathcal{F} = \sum_{i=1}^{r} a^{i}\otimes a^{i}\otimes a^{i} \otimes a^{i}\; \mbox{with} \;a^{i} \in \C^{n}\; \mbox{for}\; i=1,\dots, r.$$
By letting $A^i = a^{i}\otimes a^{i}$, we get $\mathcal{F} = \sum_{i=1}^{r} A^i \otimes A^i$ with $A^i \in \SI^{n^2}$. Thus $\rank_{M} (\mathcal {F}) \le r = \rank_{SCP}(\mathcal {F}) $.

We now prove $\rank_{SCP}(\mathcal {F}) \le (n+4n^2)\, \rank_{M}(\mathcal {F})$. Suppose that $\rank_{M}(\mathcal {F})=r$, then from \eqref{theorem:symmetric-rank-equivalence} it holds that $\mathcal{F} = \sum\limits_{i=1}^{r} A^i \otimes A^i$ with $A^i\in\CSI^{n^2}$. Now consider the associated polynomial $\mathcal{F}(x,x,x,x) = \sum\limits_{i=1}^{r} \left( x^{\top} A^i x \right)^2$.
Since $A^i$ is a complex symmetric matrix,
by letting $y^i = {A^i}^{1/2}x$, we have
\begin{align*}
& \mathcal{F}(x,x,x,x) \\ = & \sum\limits_{i=1}^{r} \left( {y^i}^{\top} y^i \right)^2 =\sum\limits_{i=1}^{r}\sum\limits_{j,k=1}^{n}  (y^i_j)^2 (y^i_k)^2 \\
= & \sum\limits_{i=1}^{r} \left( \sum\limits_{j\le k}^{n} \frac{1}{4}\left( (y^i_j+y^i_k)^4 + (y^i_j-y^i_k)^4  \right) -(\frac{n}{4}-1)\sum\limits_{j=1}^{n}(y^i_j)^4 \right).
\end{align*}
Note that $y^i_j$ is a linear function of $x$ for any $i,j$. Therefore, $(y^i_j+y^i_k)^4$ and $(y^i_j)^4$ correspond to some symmetric rank-one tensors. Therefore, $\rank_{SCP}(\mathcal {F}) \le \left(\frac{2n(n-1) }{2}+n\right)\, r = n^2\, \rank_{M}(\mathcal {F})$.
\end{proof}
We remark that the rank-one decomposition of $(y^i_j)^2 (y^i_k)^2$ is related to the Waring rank of polynomial. By involking Proposition 3.1 in \cite{CarliniCatalisanoGeramita2012} yeilds that $\rank_{SCP}(y^i_j)^2 (y^i_k)^2) \le 3$ and $\rank_{SCP}(\mathcal {F}) \le  \left( \frac{3n(n-1)}{2}+n \right)\, \rank_{M}(\mathcal {F}) =\left( \frac{3n^2-n}{2} \right)\, \rank_{M}(\mathcal {F})$, which is sightly worse than the bound in Theorem \ref{rank-upper-bound}. If the coefficients of the rank-one terms in the decomposition of $\mathcal {F}$ are required to be nonngetive, we can resort to the algorithms in \cite{Alon1986,Jiang2014} to perform such rank-one decomposition.

\section{The Low-M-Rank Tensor Recovery and Robust Tensor PCA}

In this section, we consider the low-rank tensor recovery problem \eqref{prob:tensor-completion} and robust tensor PCA \eqref{prob:robust-tensor-recovery} with an emphasis on the fourth-order tensor case.
{In the context of tensor recovery, M$^+$-rank takes a {\em conservative}\/ attitude towards estimating the CP-rank, because when M$^+$-rank is minimized, the ranks of all other unfolding matrices are also small. As a result, minimizing the M$^+$-rank is like considering the worst case situation. On the other hand, M$^-$-rank is a more {\em optimistic}\/ estimation, because M$^-$-rank is the smallest rank among all unfolding matrices, and the matrix is more likely to be recovered using a matrix completion model.}
On the middle ground, one may choose to work with a pre-specified $\pi$.
In our numerical experiments, for simplicity, for a fourth-order tensor $\F$, we always choose to group the first two indices as the row index, and group the last two indices as the column index for square unfolding (we use $\matr(\F)$ to denote the corresponding matrix).
According to Theorems \ref{theorem:asymmetric-bound} and \ref{rank-upper-bound},  by multiplying a constant factor, $\rank(\matr(\F))$ can provide an upper bound for the CP-rank and the symmetric CP-rank of $\F$ (if it is also super-symmetric). We denote $X = \matr(\mathcal{X})$, $Y = \matr(\mathcal{Y})$ and $F = \matr(\mathcal{F})$. Without loss of generality, we replace the CP-rank in the objective of \eqref{prob:tensor-completion} and \eqref{prob:robust-tensor-recovery} by $\rank(X)$, and it follows from Theorem \ref{theorem:asymmetric-bound} that by minimizing $\rank(X)$, $\rank_{CP}(\XCal)$ will be small as well.
In other words, rather than solving \eqref{prob:tensor-completion} and \eqref{prob:robust-tensor-recovery}, we solve the following two matrix problems
\begin{equation}\label{matrix-rank-min} \min\limits_{X} \ \rank(X), \quad \st, \quad \bar{\linear}(X) = b, \end{equation}
and
\begin{equation}\label{matrix-rpca} \min\limits_{Y,Z} \ \rank(Y) + \lambda \|Z\|_0, \quad \st, \quad Y + Z = F, \end{equation}
where $\bar{\linear}$ is a linear mapping such that $\bar{\linear}(X)=\linear(\XCal)$.

It is now natural to consider the convex relaxations of the two matrix problems \eqref{matrix-rank-min} and \eqref{matrix-rpca}; i.e., we replace the rank function by the nuclear norm and replace the cardinality function by the $\ell_1$ norm. This results in the following two convex relaxations for \eqref{matrix-rank-min} and \eqref{matrix-rpca}:
\begin{equation}\label{matrix-rank-min-nuclear} \min\limits_{X} \ \|X\|_*, \quad \st, \quad \bar{\linear}(X) = b, \end{equation}
and
\begin{equation}\label{matrix-rpca-nuclear} \min\limits_{Y,Z} \ \|Y\|_* + \lambda \|Z\|_1, \quad \st, \quad Y + Z = F. \end{equation}
Note that all the variables in \eqref{matrix-rank-min-nuclear} and \eqref{matrix-rpca-nuclear} are complex-valued. Thus, the $\ell_1$ norm is defined as $\|Z\|_1:=\sum_{ij}\sqrt{(\re(Z_{ij}))^2+(\im(Z_{ij}))^2}$. Although \eqref{matrix-rank-min-nuclear} and \eqref{matrix-rpca-nuclear} are complex-valued problems, they can still be solved by the methods in \cite{Cai-Candes-Shen-2008,Ma-Goldfarb-Chen-2008,Liu-Sun-Toh-2009,Aybat-Goldfarb-Ma-RPCA-2012,Tao-Yuan-SPCP-2011} with minor modifications. We omit the details for succinctness.

When the tensors are super-symmetric, we can impose the super-symmetric constraint and get the following formulation:
$$
\begin{array}{ll} \min\limits_{X} & \|{X}\|_* \\ \st & \bar{\linear}(Y) =  b, \;\matr^{-1}(X) \in \SI^{n^4},
\end{array}
$$
where $\matr^{-1}(X)$ is the tensor whose square unfolding is $X$.
Note that the above problem is equivalent to
\begin{equation}\label{matrix-rank-min-nuclear-sym}
\begin{array}{ll} \min\limits_{X} & \|{X}\|_*  \\ \st & \matr^{-1}(Y) \in \SI^{n^4},\\
                                                                        & \bar{\linear}(Y) =  b, \quad X=Y,
\end{array}
\end{equation}
which can be efficiently solved by the standard alternating direction method of multipliers (see the survey paper~\cite{Boyd2011} for more details).

\section{Numerical Results}

In this section, we shall present the performance of our new low-M-rank tensor completion and robust tensor PCA models.


\subsection{Approximating the CP-rank via the M-rank}
In this subsection, we consider the problem of estimating the CP-rank for some structured tensors. We first construct tensors in the form of
\begin{equation}\label{tensor-generation}
\mathcal{T} = \sum\limits_{i=1}^{r}a^i \otimes b^i \otimes c^i \otimes d^i \in \mathbb{C}^{n_1\times n_2\times n_3\times n_4},
\end{equation}
where $a^i,  b^i,  c^i, d^i, i=1,\ldots ,r$ are randomly generated. Obviously, the CP-rank of the resulting tensor $\mathcal{T}$ is less than or equal to $r$.
For each set of $(n_1,n_2,n_3,n_4)$ and $r$, we randomly generate $20$ tensors according to~\eqref{tensor-generation}. Then we compute the Tucker rank, the M$^+$-rank and the M$^-$-rank of each tensor, and report their average value over the 20 instances in Table~\ref{CP-approximation-tab1}.

\begin{table}[ht]{\footnotesize
\caption{CP-rank approximation: M-rank vs Tucker rank for tensors in \eqref{tensor-generation}}
\begin{center}
\begin{tabular}
{|c|c|c|c|}\hline
$r$ & Tucker rank & $M^+$ & $M^-$ \\ \hline
\multicolumn{4}{|c|}{Dimension $10 \times 10 \times 10 \times 10$}\\ \hline
12 &(10,10,10,10) & 12 &12 \\ \hline
  \multicolumn{4}{|c|}{Dimension $10 \times 10 \times 15 \times 15$}\\ \hline
12 &(10,10,12,12) & 12 &12 \\ \hline
    \multicolumn{4}{|c|}{Dimension $15 \times 15 \times 15 \times 15$}\\ \hline
18 &(15,15,15,15) & 18 &18 \\ \hline
    \multicolumn{4}{|c|}{Dimension $15 \times 15 \times 20 \times 20$}\\ \hline
18 &(15,15,18,18) & 18 &18 \\ \hline
    \multicolumn{4}{|c|}{Dimension $20 \times 20 \times 20 \times 20$}\\ \hline
30 &(20,20,20,20) & 30 & 30 \\ \hline
    \multicolumn{4}{|c|}{Dimension $20 \times 20 \times 25 \times 25$}\\ \hline
30 &(20,20,25,25) & 30 & 30 \\ \hline
    \multicolumn{4}{|c|}{Dimension $25 \times 25 \times 30 \times 30$}\\ \hline
40 &(25,25,30,30) & 40 & 40 \\ \hline
    \multicolumn{4}{|c|}{Dimension $25 \times 25 \times 30 \times 30$}\\ \hline
40 &(30,30,30,30) & 40 & 40 \\ \hline
  \end{tabular}
\end{center}
\label{CP-approximation-tab1}}
\end{table}

From Table \ref{CP-approximation-tab1}, we see that for all instances $\rank_{M^+}(\mathcal{T}) = \rank_{M^-}(\mathcal{T})=r$. Thus by Theorem~\ref{theorem:asymmetric-bound}, $\rank_{M^+}(\mathcal{T}) \le \rank_{CP}(\mathcal{T}) $, and from the construction of $\mathcal{T}$, $\rank_{CP}(\mathcal{T}) \le r$. Therefore,
we conclude that the CP-rank of these tensors is exactly $r$ and the M-rank equals 
the CP-rank for these random instances. Moreover,
since $r$ is chosen to be larger than one dimension of the tensor, some components of the Tucker rank are strictly less than $r$.

In another setting, we generate tensors in the following manner
\begin{equation}\label{tensor-generation2}
\mathcal{T} = \sum\limits_{i=1}^{r}A^i \otimes B^i,
\end{equation}
where matrices $A^i,B^i, i=1,\ldots,r$ are randomly generated in such a way that $\rank (A^i) = \rank(B^i) = k, i=1,\ldots,r$. Consequently, $r k^2$ is an upper bound for the CP-rank of $\mathcal{T}$.
From Proposition~\ref{tight-example}, we know that $\rank_{M^+}(\mathcal{T}) = \rank_{CP}(\mathcal{T}) = k^2$ when $r=1$.
One may wonder if $\rank_{M^+}(\mathcal{T}) = \rank_{CP}(\mathcal{T})$ when $r > 1$.
To this end, we let $r=k =2, 3, 4, 5$ and
generate $20$ random instances for different choices of $r$, $k$ and tensor dimensions. For each instance, we compute its Tucker rank,  the M$^+$-rank and the M$^-$-rank, and report the average value over these 20 instances in Table~\ref{CP-approximation-tab2}.

\begin{table}[ht]{\footnotesize
\caption{CP-rank approximation: M-rank vs Tucker rank for tensors in \eqref{tensor-generation2}}		
\begin{center}
\begin{tabular}
{|c|c|c|c|}\hline
CP-rank  & Tucker rank & $M^+$ &M$^-$ \\ \hline
\multicolumn{4}{|c|}{Dimension $10 \times 10 \times 10 \times 10$}\\ \hline
(*) $r=k=2$, CP-rank  $\le 8$ &   (4,4,4,4) & 8 & 2 \\ \hline
$r=k=3$, CP-rank  $\le 27$ &  (9,9,9,9) & 27 & 3 \\ \hline
$r=k=4$,  CP-rank  $\le 64$ &  (10,10,10,10) & 64 & 4 \\ \hline
\multicolumn{4}{|c|}{Dimension $10 \times 10 \times 15 \times 15$}\\ \hline
(*) $r=k=2$,  CP-rank  $\le 8$ &   (4,4,4,4) & 8 & 2 \\ \hline
$r=k=3$, CP-rank  $\le 27$ &  (9,9,9,9) & 27 & 3 \\ \hline
$r=k=4$,  CP-rank  $\le 64$  & (10,10,15,15) &  64 & 4 \\ \hline
\multicolumn{4}{|c|}{Dimension $15 \times 15 \times 20 \times 20$}\\ \hline
(*) $r=k=2$,  CP-rank  $\le 8$ &   (4,4,4,4) & 8 & 2 \\ \hline
$r=k=3$,  CP-rank  $\le 27$ &  (9,9,9,9) & 27 & 3 \\ \hline
$r=k=4$,  CP-rank  $\le 64$  &(16,16,16,16) &  64 & 4 \\ \hline
\multicolumn{4}{|c|}{Dimension $20 \times 20 \times 20 \times 20$}\\ \hline
$r=k=3$,  CP-rank  $\le 27$ &  (9,9,9,9) & 27 & 3 \\ \hline
$r=k=4$, CP-rank  $\le 64$  &(15,15,16,16) &  64 & 4 \\ \hline
$r=k=5$, CP-rank  $\le 125 $ &  (20,20,20,20) & 125 & 5 \\ \hline
\multicolumn{4}{|c|}{Dimension $20 \times 20 \times 30 \times 30$}\\ \hline
$r=k=3$, CP-rank  $\le 27$ &  (9,9,9,9) & 27 & 3 \\ \hline
$r=k=4$, CP-rank  $\le 64$ &  (16,16,16,16) & 64 & 4 \\ \hline
$r=k=5$, CP-rank  $\le 125 $ &  (20,20,25,25) & 125 & 5 \\ \hline
  \end{tabular}
\end{center}
\label{CP-approximation-tab2}}
\end{table}
From Table \ref{CP-approximation-tab2} we see that $\rank_{M^+}(\mathcal{T}) = rk^2 $ for all instances. This further implies that   the CP-rank of the generated tensors is always equal to $rk^2$ and the M$^+$-rank equals the CP-rank. Moreover, as shown in the rows that are marked by (*) in Table~\ref{CP-approximation-tab2}, the CP-rank is strictly less than any dimension size of the tensor, but the Tucker rank is still strictly less than the CP-rank. This again confirms that the M$^+$-rank is a much better approximation of the CP-rank compared with the averaged Tucker rank.

To summarize, results in both Table~\ref{CP-approximation-tab1} and Table~\ref{CP-approximation-tab2} suggest that $\rank_{M^+}(\mathcal{T}) = \rank_{CP}(\mathcal{T})$ when $\mathcal{T}$ is randomly generated from either~\eqref{tensor-generation} or~\eqref{tensor-generation2}, while there is a substantial gap between the averaged Tucker rank and the CP-rank. Therefore, $\rank_{M^+}$ is a better estimation than the Tucker rank for estimating the CP-rank, at least under the settings considered in our experiments.

\subsection{Synthetic Data for Low-Rank Tensor Optimization Problems}

\subsubsection{Low-M-rank Tensor Completion}

In this subsection we use the FPCA algorithm proposed in \cite{Ma-Goldfarb-Chen-2008} to solve the low-M-rank tensor completion \eqref{matrix-rank-min-nuclear} for fourth-order tensor. {We compare it with the low-n-rank tensor completion \eqref{prob:low-n-rank-min-nuclear}, t-SVD based tensor completion \cite{ZhangAeron2017} and KBR based tensor completion \cite{XieZhaoMengXu2017}. The results are reported in Tables \ref{tensor-completion-tab1} and \ref{tensor-completion-tab}, respectively.} The testing examples are generated as follows. We generate random complex-valued tensor $\mathcal{X}_0$ based on~\eqref{tensor-generation} with various tensor dimension sizes and different values of $r$ so that the CP-rank of the resulting tensor is less than or equal to $r$.
Under each setting, we generate $20$ instances and randomly sample $70\%$, $50\%$, $30\%$ of the entries as the observed ones for tensor completion.
We also use the code in~\cite{Goldfarb-Qin-tensor-2013} to solve the low-n-rank tensor completion problem~\eqref{prob:low-n-rank-min-nuclear}.
We report the average of the Tucker rank, the average of the M$^+$-rank and the average of the M$^-$-rank of the recovered tensor $\mathcal{X}^*$ over the 20 instances in Table \ref{tensor-completion-tab1}, where ``sr'' denotes the sampling ratio, i.e., the percentage of observed entries. We also report the average of the relative errors for both approaches in Table \ref{tensor-completion-tab1}, where the relative error is defined as
\[Err := \frac{\|\mathcal{X}^*-\mathcal{X}_0\|_F}{\|\mathcal{X}_0\|_F}.\]
The CPU times reported are in seconds.

\begin{table}[ht]{\footnotesize
		\caption{Low-M-rank tensor completion vs.\ low-n-rank tensor completion}
\begin{center}
\begin{tabular}
{|c|c|c|c|c|c|}\hline
sr & \multicolumn{2}{|c|}{low-n-rank completion }&\multicolumn{3}{|c|}{low-M-rank completion }\\ \hline
               & Err & Tucker rank & Err & M$^+$ &M$^-$ \\ \hline
\multicolumn{6}{|c|}{Dimension $10 \times 10 \times 10 \times 10$,  \;$r =2$,\; CP-rank $\le 2$ }\\ \hline
70\%&1.23e-5 & (2, 2, 2, 2) & 1.83e-5 & 2&   2 \\
50\%&1.60e-5 & (2, 2, 2, 2) & 1.13e-5 & 2&   2 \\
30\%&7.79e-2 & (10, 10, 10, 10) & 1.18e-4 & 2.67&   2 \\
\hline
\multicolumn{6}{|c|}{Dimension $10 \times 10 \times 10 \times 10$,  \;$r =4$,\; CP-rank $\le 4$ }\\ \hline
70\%&1.80e-5 & (4, 4, 4, 4) & 3.24e-5 & 4 & 4  \\
50\%&2.04e-1 & (10, 10, 10, 10) & 1.78e-5  & 4 & 4 \\
30\%&6.31e-1 & (10, 10, 10, 10) & 2.69e-4  & 5.33 & 4 \\
\hline
\multicolumn{6}{|c|}{Dimension $10 \times 10 \times 10 \times 10$,  \;$r =6$,\; CP-rank $\le 6$ }\\ \hline
70\%&1.69e-1 & (10, 10, 10, 10) & 4.69e-5 & 6& 6 \\
50\%&4.50e-1 & (10, 10, 10, 10) & 2.62e-5 & 6& 6 \\
30\%&7.18e-1 & (10, 10, 10, 10) & 4.97e-6 & 6& 6 \\
\hline
\multicolumn{6}{|c|}{Dimension $15 \times 15 \times 15 \times 15$,  \;$r =3$,\; CP-rank $\le 3$ }\\ \hline
70\%&1.24e-5 & (3, 3, 3, 3) & 6.67e-6 & 3& 3   \\
50\%&1.63e-5 & (3, 3, 3, 3) & 1.83e-6 & 3& 3  \\
30\%&1.57e-3 & (3, 3, 3, 3) & 5.49e-5 & 3& 3   \\ \hline
\multicolumn{6}{|c|}{Dimension $15 \times 15 \times 15 \times 15$,  \;$r =6$,\; CP-rank $\le 6$ }\\ \hline
70\%&1.57e-5 & (6, 6, 6, 6) & 1.22e-5 & 6& 6  \\
50\%&1.73e-1 & (15, 15, 15, 15) & 3.90e-6 & 6& 6  \\
30\%&6.10e-1 & (15, 15, 15, 15) & 7.78e-5 & 6& 6   \\ \hline
\multicolumn{6}{|c|}{Dimension $15 \times 15 \times 15 \times 15$,  \;$r =9$,\; CP-rank $\le 9$ }\\ \hline
70\%&1.56e-1 & (15, 15, 15, 15) & 2.13e-5 & 9& 9  \\
50\%&4.47e-1 & (15, 15, 15, 15) & 6.80e-6 & 9& 9   \\
30\%&7.16e-1 & (15, 15, 15, 15) & 1.40e-4 & 9& 9  \\ \hline
\multicolumn{6}{|c|}{Dimension $20 \times 20 \times 20 \times 20$,  \;$r =4$,\; CP-rank $\le 4$ }\\ \hline
70\%&7.26e-6 & (4, 4, 4, 4) & 3.49e-6 & 4& 4   \\
50\%&1.45e-5 & (4, 4, 4, 4) & 6.76e-7 & 4& 4   \\
30\%&1.96e-5 & (4, 4, 4, 4) & 2.37e-5 & 4& 4   \\ \hline
\multicolumn{6}{|c|}{Dimension $20 \times 20 \times 20 \times 20$,  \;$r =8$,\; CP-rank $\le 8$ }\\ \hline
70\%&1.67e-5 & (8, 8, 8, 8) & 6.60e-6 & 8& 8  \\
50\%&1.34e-1 & (20, 20, 20, 20) & 1.59e-6 & 8& 8  \\
30\%&5.87e-1 & (20, 20, 20, 20) & 4.24e-5 & 8& 8   \\ \hline
\multicolumn{6}{|c|}{Dimension $20 \times 20 \times 20 \times 20$,  \;$r =12$,\; CP-rank $\le 12$ }\\ \hline
70\%&1.45e-1 & (20, 20, 20, 20) & 8.58e-6 & 12& 12   \\
50\%&4.20e-1 & (20, 20, 20, 20) & 3.46e-6 & 12& 12    \\
30\%&6.99e-1 & (20, 20, 20, 20) & 6.75e-5 & 12& 12     \\ \hline
\end{tabular}
\end{center}
\label{tensor-completion-tab1}}
\end{table}

From Table \ref{tensor-completion-tab1} we see that when the CP-rank is not very small, the low-n-rank completion approach fails to recover the original tensor while our low-M-rank method works well and its relative error is usually at the order of $10^{-5}$ or $10^{-6}$.
 This result is not surprising. To illustrate this, let us take a $10 \times 10 \times 10 \times 10$ tensor as an example.
 In this case the mode-$n$ unfolding and the square unfolding will result in a $10 \times 1000$ and a $100 \times 100$ matrices respectively.
 When the CP-rank of the underlying tensor is equal to $6$, it is a relatively high rank for a $10 \times 1000$ matrix, while it is a relatively low rank for a $100 \times 100$ matrix.
This is exactly what happens in the third row-block of Table~\ref{tensor-completion-tab1} when the dimension of the tensor is $10\times 10\times 10\times 10$ and the CP-rank is $6$. In addition, we note that the Tucker rank is often larger than the CP-rank when it fails to complete the original tensor. However, the M$^+$-rank and M$^-$-rank are almost always equal to the CP-rank except for only two cases. This again suggests that the M-rank is a good approximation to the CP-rank. 
Moreover, the results in Table \ref{tensor-completion-tab1} also show that the low-M-rank tensor completion model has a much better recoverability than the low-n-rank model in terms of the relative error of the recovered tensors.
{Furthermore, we conduct similar tests on our low-M-rank model \eqref{matrix-rank-min-nuclear}, low-n-rank model, t-SVD based tensor completion and KBR based tensor completion} when the CP-rank is larger than the size of one dimension of the tensor. This time we assume only $30\%$ of entries are observed. {The averaged errors and run time in seconds over $20$ randomly generated instances for different tensor dimension sizes are summarized in Table~\ref{tensor-completion-tab}, which show that our low-M-rank tensor completion model can still recover the tensors very well while the other three methods struggled.}

In another set of tests, we aim to observe the relationship between the M-rank and the symmetric CP-rank via solving the super-symmetric tensor completion problem~\eqref{matrix-rank-min-nuclear-sym}. To this end, we randomly generate $20$ complex-valued tensors $\mathcal{X}_0$ in the form~\eqref{tensor-generation} for different choice of tensor dimensions and values of $r$, so that the symmetric CP-rank of the resulting tensor is less than or equal to $r$.
For each generated tensor, we randomly remove $60\%$ of the entries and then solve the tensor completion problem~\eqref{matrix-rank-min-nuclear-sym}. The results are reported in Table~\ref{sym-tensor-completion-tab}, which suggests that the original tensor is nicely recovered (all with the relative error at the order of $10^{-6}$). Moreover, the M-rank and the symmetric CP-rank shown in the table are {\it always} identical, which implies that the M-rank remains a good approximation to the symmetric CP-rank for super-symmetric tensor.
We also note that solving problem~\eqref{matrix-rank-min-nuclear-sym} is much more time consuming than solving~\eqref{matrix-rank-min-nuclear}, due to the super-symmetric constraint which is essentially equivalent to $O(n^{4})$ linear constraints and is costly to deal with.


\begin{table*}[t]{\footnotesize
\caption{Comparison of $4$ tensor completion methods for randomly generated instances}
\begin{center}
\begin{tabular}
{|c|c|c|c|c|c|c|c|c|}\hline
    & \multicolumn{2}{|c|}{low-n-rank} & \multicolumn{2}{|c|}{low-M-rank} & \multicolumn{2}{|c|}{KBR} & \multicolumn{2}{|c|}{t-SVD} \\\hline
$r$ & $Err$ & CPU & $Err$ & CPU & $Err$ & CPU & $Err$ & CPU \\ \hline
\multicolumn{9}{|c|}{Dimension $10 \times 10 \times 10 \times 10$}\\ \hline
12 & 7.77e-01 & 3.70 & 6.92e-05 & 2.50 & 6.62e-02 & 4.29 & 1.11e-01 & 6.56 \\ \hline
\multicolumn{9}{|c|}{Dimension $10 \times 10 \times 13 \times 13$}\\\hline
15 & 7.80e-01 & 5.41 & 1.62e-04 & 4.39 & 6.27e-02 & 5.38 & 1.06e-01 & 8.45 \\ \hline
\multicolumn{9}{|c|}{Dimension $20 \times 20 \times 20 \times 20$}\\\hline
24 & 7.81e-01 & 66.33 & 3.66e-06 & 29.15 & 3.90e-02 & 42.75 & 8.79e-02 & 31.57  \\ \hline
\multicolumn{9}{|c|}{Dimension $20 \times 20 \times 25 \times 25$}\\\hline
28 & 7.83e-01 & 120.55 & 9.83e-04 & 1.21 & 3.56e-02 & 65.63 & 7.96e-02 & 46.48   \\ \hline
\multicolumn{9}{|c|}{Dimension $30 \times 30 \times 30 \times 30$}\\\hline
35 &  7.82e-01 & 999.19 & 8.92e-05 & 4.42 & 2.78e-02 & 295.44 & 7.44e-02 & 130.50  \\ \hline
\multicolumn{9}{|c|}{Dimension $35 \times 35 \times 40 \times 40$}\\\hline
45 & 7.75e-01 & 1624.32 & 5.91e-05 & 7.43 & 2.83e-02 & 539.94 & 7.64e-02 & 176.83  \\ \hline
\end{tabular}
\end{center}
\label{tensor-completion-tab}}
\end{table*}

\begin{table}[ht]{\footnotesize
		\caption{Numerical results for low-M-rank super-symmetric tensor completion by solving \eqref{matrix-rank-min-nuclear-sym}}
\begin{center}
\begin{tabular}
{|c|c|c|c|}\hline
 $\rank_{SCP}(\mathcal{X}_0)$ & $Err$ & CPU & $\rank_{M}(\mathcal{X}^*)$ \\ \hline
\multicolumn{4}{|c|}{Dimension $10 \times 10 \times 10 \times 10$}\\ \hline
$r=8,\; \rank_{SCP}(\mathcal{X}_0)\le 8$ & 8.83e-006 & 5.96 & 8 \\ \hline
$r=12,\; \rank_{SCP}(\mathcal{X}_0)\le 12$ & 6.81e-006 & 8.33 & 12 \\
 \hline
  \multicolumn{4}{|c|}{Dimension $15 \times 15 \times 15 \times 15$}\\\hline
$r=8,\; \rank_{SCP}(\mathcal{X}_0)\le 8$  & 8.95e-006 & 64.66 & 8 \\ \hline
$r=20,\; \rank_{SCP}(\mathcal{X}_0)\le 20$  & 6.19e-006 & 89.40 & 20 \\
\hline
\multicolumn{4}{|c|}{Dimension $20 \times 20 \times 20 \times 20$}\\\hline
$r=15,\; \rank_{SCP}(\mathcal{X}_0)\le 15$  & 6.63e-006 & 523.54 & 15 \\ \hline
$r=25,\; \rank_{SCP}(\mathcal{X}_0)\le 25$  & 6.46e-006 & 567.30 & 25 \\
\hline
\multicolumn{4}{|c|}{Dimension $25 \times 25 \times 25 \times 25$}\\\hline
$r=15,\; \rank_{SCP}(\mathcal{X}_0)\le 15$  & 4.21e-006 & 1109.26& 15 \\ \hline
$r=30,\; \rank_{SCP}(\mathcal{X}_0)\le 30$  & 3.87e-006 & 2470.67& 30\\
\hline
  \end{tabular}
\end{center}
\label{sym-tensor-completion-tab}}
\end{table}

\subsubsection{Low-M-Rank Robust Tensor PCA}

In this subsection we report the numerical results for the robust tensor PCA problem \eqref{matrix-rpca-nuclear} {based on the low-M-rank, the low-n-rank, the t-SVD based rank and the KBR based rank models.} We choose $\lambda = 1/\sqrt{n_1n_2}$ in \eqref{matrix-rpca-nuclear}, and apply the alternating linearization method in \cite{Aybat-Goldfarb-Ma-RPCA-2012} to solve \eqref{matrix-rpca-nuclear}.
The low-rank tensor $\mathcal{Y}_0$ is randomly generated according to formula~\eqref{tensor-generation}, and a random complex-valued sparse tensor $\mathcal{Z}_0$ is generated with cardinality of $0.05\cdot n_1 n_2 n_3 n_4$. Then we set $\mathcal{F} = \mathcal{Y}_0 + \mathcal{Z}_0 $ as the observed data. We conduct our tests under various settings of tensor dimension sizes and CP-ranks of $\mathcal{Y}_0$. For each setting, $20$ instances are randomly generated for test. We
also apply the code in~\cite{Goldfarb-Qin-tensor-2013} to solve the low-n-rank robust tensor PCA problem:
\begin{equation}\label{prob:low-n-rank-RPCA}
\min\limits_{\mathcal{Y},\mathcal{Z}\in\C^{n_1\times n_2\cdots \times n_{d}}} \ \frac{1}{d}\sum_{j=1}^d \|Y(j)\|_*+ \lambda \|\mathcal{Z}\|_1, \quad \st \ \quad \mathcal{Y} + \mathcal{Z} = \mathcal{F}.
\end{equation}
{The details of the t-SVD based and the KBR based robust tensor PCA can be found in \cite{LuEtal2016, XieZhaoMengXu2017}.

Suppose $\mathcal{Y}^*$ and $\mathcal{Z}^*$ are the low-rank tensor and sparse tensor returned by the algorithms. We define the relative error of the low-rank part and sparse part as
\[Err_{L} := \frac{\|\mathcal{Y}^* -\mathcal{Y}_0\|_F}{\|\mathcal{Y}_0 \|_F}, \qquad Err_{S} := \frac{\|\mathcal{Z}^* -\mathcal{Z}_0\|_F}{\|\mathcal{Z}_0 \|_F}.\]
We report the average of these two relative errors, the recovered low-rank tensor $\mathcal{Y}^*$, and the average run time (in seconds) of each method over $20$ instances in Table \ref{tensor-rpca-tab}.
Results in Table \ref{tensor-rpca-tab} suggest that in many cases, the low-n-rank, t-SVD based rank and the KBR based rank robust tensor PCA models fail to extract the low-rank part while our low-M-rank robust tensor PCA model can always recover the tensor with relative error at the order of $10^{-3}$ to $10^{-6}$. }

\begin{table*}[t]
	{\scriptsize
		\caption{Comparison of $4$ robust tensor PCA methods for randomly generated instances  }
		\begin{center}
			\begin{tabular}
				{|c|c|c|c|c|c|c|c|c|c|c|c|c|}\hline
				&\multicolumn{3}{|c|}{low-n-rank RPCA}&\multicolumn{3}{|c|}{low-M-rank RPCA}&\multicolumn{3}{|c|}{KBR RPCA}&\multicolumn{3}{|c|}{t-SVD RPCA}\\ \hline
				CP & $\mbox{Err}_{{S}}$ & $\mbox{Err}_{{L}}$ & CPU & $\mbox{Err}_{S}$  & $\mbox{Err}_{L}$ & CPU & $\mbox{Err}_{S}$  & $\mbox{Err}_{L}$ &  CPU & $\mbox{Err}_{S}$  & $\mbox{Err}_{L}$ &  CPU  \\ \hline
				\multicolumn{13}{|c|}{Dimension $10 \times 10 \times 10 \times 10$}\\ \hline
				$\le 2$ &8.08e-01 & 1.03e-01 & 31.10 & 2.09e-02 & 5.99e-04 & 0.09 & 7.09e-01 & 1.61e-01 & 0.29 & 7.24e-01 & 1.57e-01 & 2.52 \\ \hline
				$\le 4$ & 8.98e-01 & 1.18e-01 & 30.00 & 5.08e-02 & 1.46e-03 & 0.08 & 7.18e-01 & 1.67e-01 & 0.31 & 7.87e-01 & 1.66e-01 & 2.50 \\ \hline
				$\le 6$ & 1.06e+00 & 1.36e-01 & 33.69 & 6.52e-02 & 1.80e-03 & 0.09 & 8.39e-01 & 1.73e-01 & 0.46 & 9.17e-01 & 1.72e-01 & 2.71  \\ \hline
				$\le 8$ & 1.08e+00 & 1.43e-01 & 30.98 & 8.26e-02 & 2.42e-03 & 0.09 & 8.57e-01 & 1.82e-01 & 0.42 & 9.47e-01 & 1.82e-01 & 2.61   \\ \hline
				$\le 12$ & 1.17e+00 & 1.45e-01 & 34.39 & 1.53e-01 & 4.26e-03 & 0.09 & 9.22e-01 & 1.76e-01 & 0.50 & 1.02e+00 & 1.76e-01 & 2.88   \\ \hline
				\multicolumn{13}{|c|}{Dimension $15 \times 15 \times 15 \times 15$}\\ \hline
				$\le 3$ &2.18e-01 & 2.94e-02 & 53.58 & 4.84e-03 & 8.68e-05 & 0.52 & 5.24e-01 & 1.57e-01 & 0.26 & 4.59e-01 & 1.48e-01 & 9.20  \\ \hline			
				$\le 6$& 3.66e-01 & 4.80e-02 & 55.83 & 8.33e-03 & 1.50e-04 & 0.52 & 5.84e-01 & 1.57e-01 & 0.36 & 5.79e-01 & 1.50e-01 & 9.32   \\ \hline		
				$\le 9$& 4.75e-01 & 6.05e-02 & 54.71 & 1.65e-02 & 2.97e-04 & 0.50 & 6.29e-01 & 1.57e-01 & 0.41 & 6.50e-01 & 1.51e-01 & 8.61    \\ \hline		
				$\le 12$& 5.77e-01 & 7.28e-02 & 58.53 & 1.65e-02 & 2.92e-04 & 0.51 & 7.18e-01 & 1.62e-01 & 0.52 & 7.49e-01 & 1.57e-01 & 8.97     \\ \hline	
				$\le 18$& 6.39e-01 & 8.48e-02 & 55.35 & 2.08e-02 & 3.90e-04 & 0.49 & 7.33e-01 & 1.72e-01 & 0.57 & 7.86e-01 & 1.68e-01 & 9.00      \\ \hline														
				\multicolumn{13}{|c|}{Dimension $20 \times 20 \times 20 \times 20$}\\ \hline
				$\le 4$ &7.42e-02 & 9.86e-03 & 186.26 & 4.19e-04 & 6.39e-06 & 2.23 & 4.17e-01 & 1.47e-01 & 0.54 & 2.87e-01 & 1.37e-01 & 30.39   \\ \hline	
				$\le 8$ & 1.42e-01 & 1.84e-02 & 181.94 & 4.24e-03 & 5.47e-05 & 2.16 & 5.16e-01 & 1.51e-01 & 0.77 & 4.10e-01 & 1.38e-01 & 28.97   \\ \hline	
				$\le 12$ & 2.19e-01 & 2.81e-02 & 207.48 & 4.51e-03 & 6.02e-05 & 2.16 & 5.85e-01 & 1.54e-01 & 0.97 & 5.02e-01 & 1.41e-01 & 22.49   \\ \hline	
				$\le 16$ & 2.74e-01 & 3.53e-02 & 203.18 & 5.85e-03 & 8.22e-05 & 2.08 & 6.22e-01 & 1.58e-01 & 1.03 & 5.57e-01 & 1.46e-01 & 20.91    \\ \hline	
				$\le 24$ & 3.36e-01 & 4.46e-02 & 198.84 & 7.17e-03 & 9.86e-05 & 1.98 & 7.03e-01 & 1.70e-01 & 1.21 & 6.62e-01 & 1.57e-01 & 20.80  \\															 \hline
			\end{tabular}
		\end{center}
		\label{tensor-rpca-tab}}
\end{table*}

\subsection{Colored Video Completion and Decomposition}

In this subsection, we apply the low-M-rank tensor completion and low-M-rank robust tensor PCA to colored video completion and decomposition. A colored video consists of $n_4$ frames, and each frame is an image stored in the RGB format as an $n_1\times n_2\times n_3$  array. As a result, filling in the missing entries of the colored video and decomposing the video to static background and moving foreground can be regarded as low-rank tensor completion \eqref{prob:tensor-completion} and robust tensor PCA \eqref{prob:robust-tensor-recovery}, respectively.

In our experiment for tensor completion, we chose $50$ frames from a video taken in a lobby, which was introduced by Li \etal in \cite{Li-Huang-Gu-Tian-2004}. Each frame in this video is a colored image with size $128\times 160\times 3$. The images in the first row of Figure \ref{CompletionPics70Missing} are three frames of the video. Basically we chose the $50$ frames such that they only contain static background, and thus the $128\times 160\times 3 \times 50$ fourth-order tensor formed by them are expected to have low rank, because the background is almost the same in each frame. We then randomly remove 70\% of the entries from the video, and the images in the second row of Figure \ref{CompletionPics70Missing} are the frames after the removal. We then apply the FPCA algorithm \cite{Ma-Goldfarb-Chen-2008} to solve \eqref{matrix-rank-min-nuclear} with the square unfolding matrix with matrix size $20480 \times 150$ and with certain single mode-n unfolding matrix respectively, to complete the missing entries in the target tensor. {In addition, the t-SVD based tensor completion \cite{ZhangAeron2017} and KBR based tensor completion \cite{XieZhaoMengXu2017} are also performed, and the algorithm in \cite{Goldfarb-Qin-tensor-2013} is applied to solve low-n-rank minimization model \eqref{prob:low-n-rank-min} for comparison. The images in the third to seventh row of Figure \ref{CompletionPics70Missing} are the frames recovered using the five models/methods respectively. We can see that when 70\% tensor entries are missing the recovery capabilities of square unfolding,  single mode-n unfolding, t-SVD based completion and KBR based completion are comparable, which are all slightly better than that of low-n-rank minimization model. This can be seen from the blurred red flower in the pot pointed by the red arrow in the fifth row of Figure \ref{CompletionPics70Missing}. Next, we further increase the portion of the missing data to 80\% and the recovered videos are shown in Figure \ref{CompletionPics80Missing}. We see that the third row does not retain the correct color and the fifth row and the sixth row are blurred, but the images in the fourth row and seventh row are almost as good as the original images in the first row. This implies that square unfolding is not necessarily always better than mode-n unfolding for this kind of problem, which further implies that the video data may not have low CP-rank. Minimizing the average nuclear norm of all mode-n unfolding, which is popular in tensor completion \cite{LMWY09, Gandy-Recht-Yamada-2011, Goldfarb-Qin-tensor-2013}, however, is always worse than an appropriate single mode-n minimization. Determining the most appropriate mode to unfold the tensor is highly dependent on the structure and the physical meaning of the underlying tensor model. In conclusion, we observe that for each specific application, it pays to learn the correct mode of matricization, square or otherwise.}

In our experiment for robust tensor PCA, we chose another $50$ frames from the same video in \cite{Li-Huang-Gu-Tian-2004}. These frames were chosen such that the frames contain some moving foregrounds. The task in robust tensor PCA is to decompose the given tensor into two parts: a low-rank tensor corresponding to the static background, and a sparse tensor corresponding to the moving foreground. Note that the tensor corresponding to the moving foreground is sparse because the foreground usually only occupies a small portion of the frame.
Thus this decomposition can be found by solving the robust tensor PCA problem \eqref{prob:robust-tensor-recovery}. Here we again apply the alternating linearization method proposed in \cite{Aybat-Goldfarb-Ma-RPCA-2012} to solve \eqref{matrix-rpca-nuclear} for the task of robust tensor PCA, where $\lambda$ in~\eqref{matrix-rpca-nuclear} is chosen as $1/\sqrt{n_1 n_2}$ and $n_1, n_2$ are the first two dimensions of the fourth-order tensor.
The decomposition results are shown in Figure \ref{graph:robust-recovery}. The images in the first row of Figure \ref{graph:robust-recovery} are frames of the original video. The images in the second and third rows of Figure \ref{graph:robust-recovery} are the corresponding static background and moving foreground, respectively. We can see that our low-M-rank robust tensor PCA approach very effectively decomposes the video, which is a fourth-order tensor.


\begin{figure}
	\begin{center}
		\includegraphics[scale=0.55]{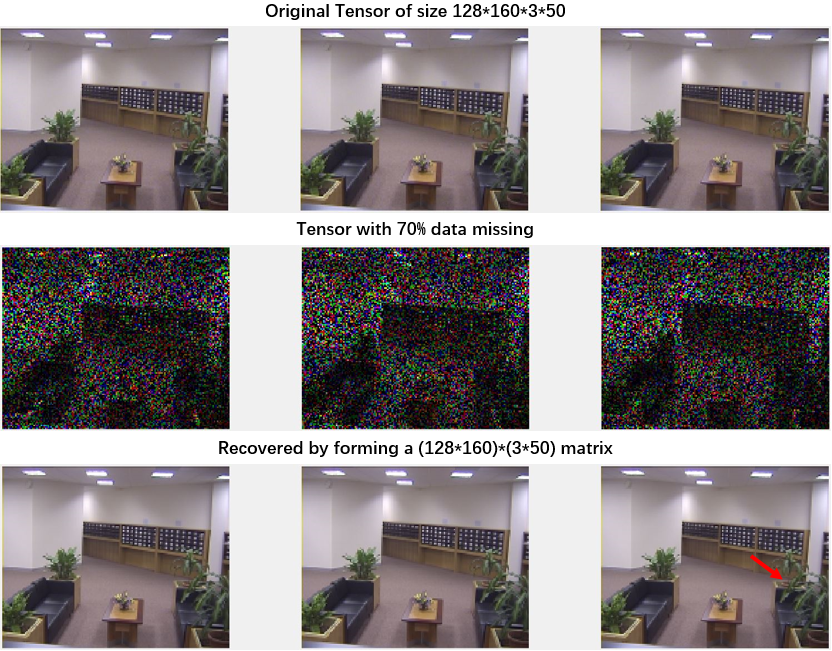}
        \includegraphics[scale=0.55]{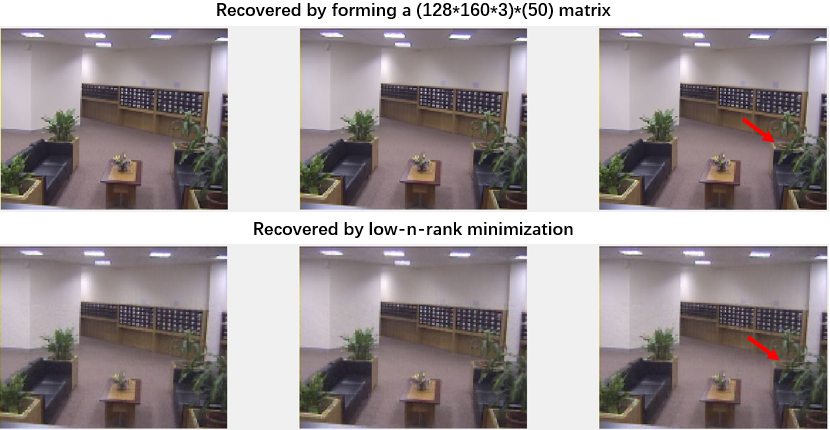}
        \includegraphics[scale=0.55]{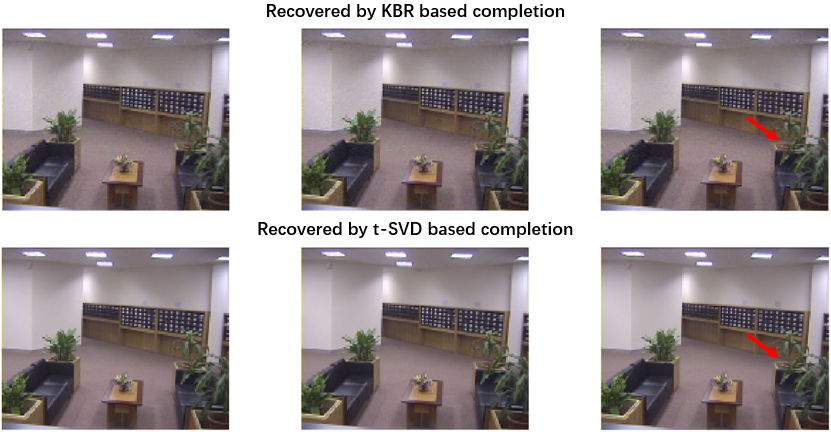}
		\caption{Colored video completion with 70\% missing entries}
\label{CompletionPics70Missing}
	\end{center}
\end{figure}

\begin{figure}
	\begin{center}
		\includegraphics[scale=0.55]{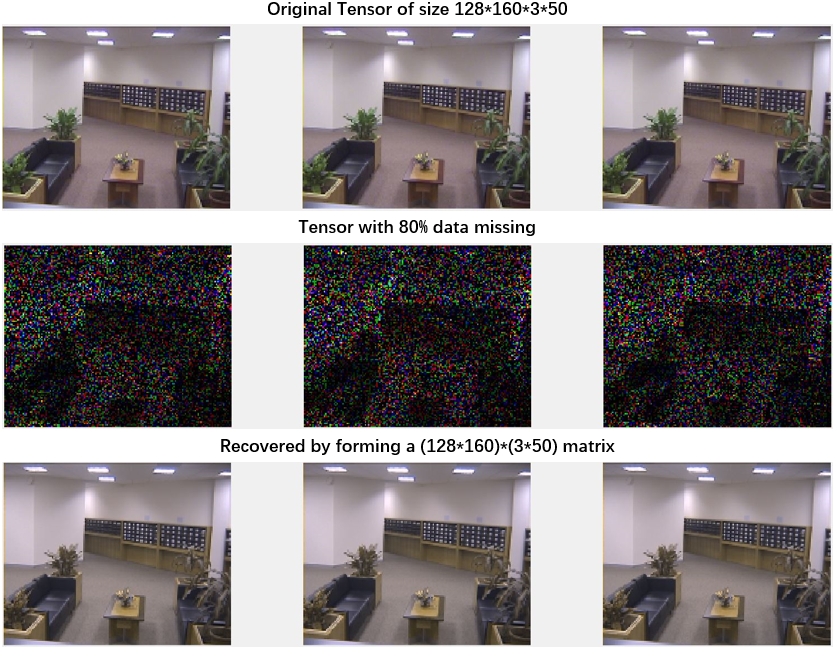}
        \includegraphics[scale=0.55]{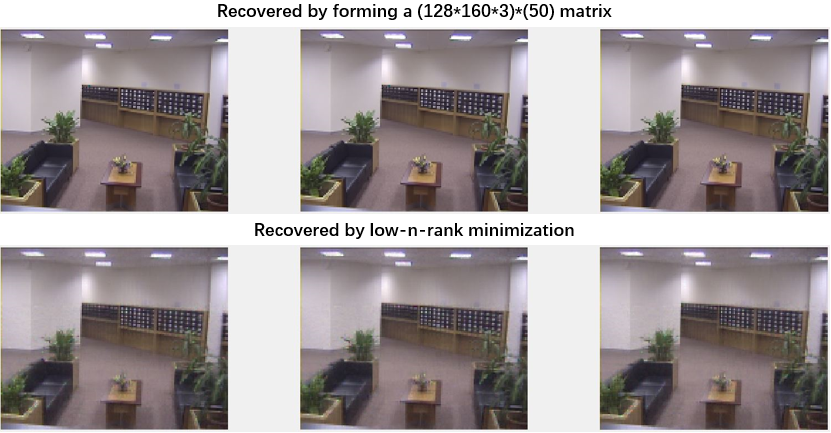}
        \includegraphics[scale=0.55]{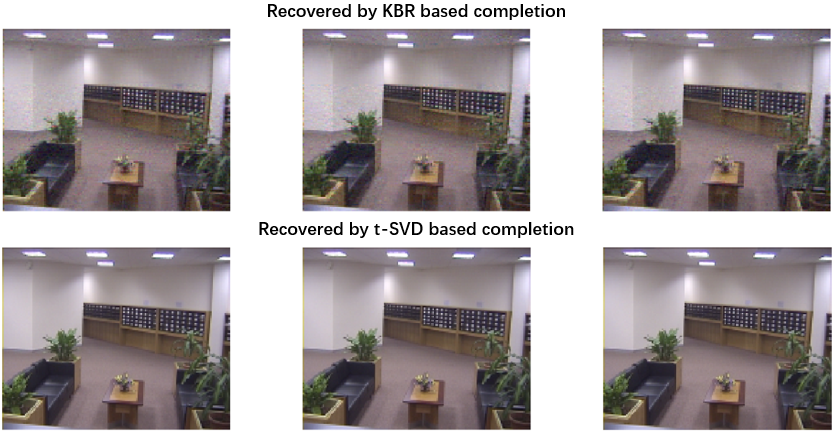}
		\caption{Colored video completion with 80\% missing entries}
		\label{CompletionPics80Missing}
	\end{center}
\end{figure}

\begin{figure}
\begin{center}
  \includegraphics[scale=0.4]{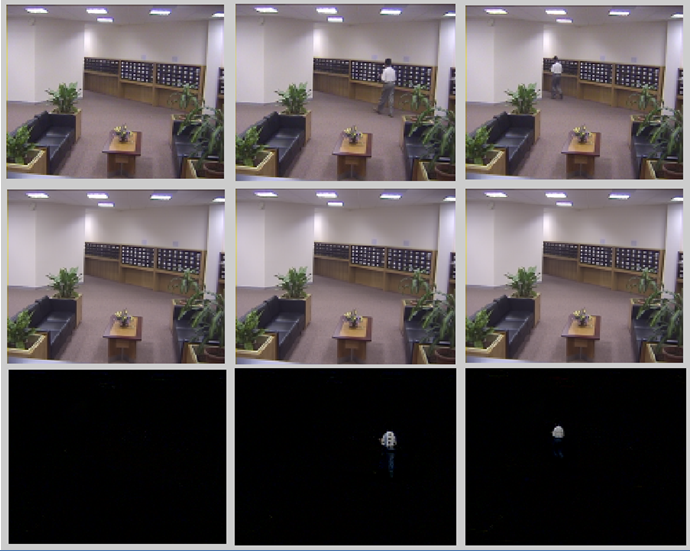}\\
  \caption{Robust video recovery as robust tensor PCA. The first row are the $3$ frames of the original video sequence. The second row are the recovered background, and the last row are the recovered foreground.}\label{graph:robust-recovery}
\end{center}
\end{figure}


\section{Concluding Remarks}

{In this paper, we proposed a new approach for solving tensor completion and robust tensor PCA problems. The approach is based on the newly defined notion of M-rank of a tensor. We provided a theoretical foundation for using the M-rank as a proximity of the CP-rank. Computationally, the M-rank is much easier to compute than the CP-rank. We then solved the low-M-rank tensor completion and robust tensor PCA problems by converting the tensor problems to matrix problems using the M-decomposition. The numerical results show that our method can recover the tensors very well, confirming that the M-rank is a good approximation of the CP-rank in such applications. Compared to the classical mode-$n$ rank, the new M-rank differs in its principle to unfold the tensor into matrices: it takes a balanced approach. Namely, for a $2m$-order tensor, the M-rank unfolding groups the indices by $m\times m$, while the Tucker rank folding groups the indices in the fashion of $1\times (2m-1)$. It is certainly possible to attempt all groupings such as $k\times (2m-k)$ with $k=1,2,...,m$, though the computational costs increase exponentially with the order of the tensor. A balanced folding can also be extended to odd-order tensors; for an $(2m+1)$-order tensor, leading to grouping the indices by $m \times (m+1)$. For the tensors of order 3, this reduces to the traditional mode-$n$ matricization.
Most results in this paper, including CP-rank approximation and low-M-rank tensor recovery, can be easily generalized to odd-order tensors.
}

\section*{Acknowledgments}

We would like to thank Cun Mu for discussions on the implementation of low-n-rank tensor optimization problems. We are also grateful to three anonymous reviewers for constructive suggestions that greatly improved the presentation of the paper.

\bibliographystyle{IEEEtran}

\end{document}